\documentclass[11pt,onecolumn,draftcls]{IEEEtran}

\usepackage{cite}
\usepackage{graphicx}
\usepackage{array}
\usepackage{amsmath}
\usepackage{amssymb}
\usepackage{eurosym}
\usepackage{bm}
\usepackage{algorithm}
\usepackage{url}
\usepackage{subfigure}
\usepackage{algorithmic}

\newcommand{\set}[1]{\ensuremath{\mathcal #1}}

\newcommand{\tx}[1]{\text{tx}(l)}
\newcommand{\rx}[1]{\text{rx}(l)}

\newcommand{\separator}{
  \begin{center}
    \rule{\columnwidth}{0.3mm}
  \end{center}
}

\newcommand{\bi}{\begin{itemize}}
\newcommand{\ei}{\end{itemize}}
\newcommand{\be}{\begin{enumerate}}
\newcommand{\ee}{\end{enumerate}}

\newtheorem{theorem}{Theorem}

\newtheorem{lemma}{Lemma}
\newtheorem{definition}{Definition}

\newcommand{\beq}{\begin{eqnarray*}}
\newcommand{\eeq}{\end{eqnarray*}}
\newcommand{\beqn}{\begin{eqnarray}}
\newcommand{\eeqn}{\end{eqnarray}}



















\newcommand{\argmax}{\operatornamewithlimits{argmax}} 


\title{Improved Dual Decomposition Based Optimization for DSL
Dynamic Spectrum Management}
\author{Paschalis Tsiaflakis$^*$\thanks{This research work was
carried out at the ESAT Laboratory of the Katholieke Universiteit
Leuven, in the frame of K.U. Leuven Research Council: CoE EF/05/006,
GOA AMBioRICS, FWO project G.0235.07(`Design and evaluation of DSL
systems with common mode signal exploitation'), FWO project
G.0226.06, Belgian Federal Science Policy Office IUAP DYSCO.}, Ion
Necoara, Johan A. K. Suykens, Marc Moonen\\
{\small Electrical Engineering, Katholieke Universiteit Leuven\\
Kasteelpark Arenberg 10, B-3001 Leuven-Heverlee, Belgium\\
Phone: +32 (0)16 32 18 03, Fax: +32 (0)16 32 19 70\\
email:\{paschalis.tsiaflakis, ion.necoara, johan.suykens,
marc.moonen\}@esat.kuleuven.be}\\}


\begin{document}
\maketitle

\vspace*{-18mm}
\begin{abstract}
\vspace*{-2mm}
Dynamic spectrum management (DSM) has been recognized as a key
technology to significantly improve the performance of digital
subscriber line (DSL) broadband access networks. The basic concept
of DSM is to coordinate transmission over multiple DSL lines so as
to mitigate the impact of crosstalk interference amongst
them. Many algorithms have been proposed to tackle the nonconvex
optimization problems appearing in DSM, almost all of them relying
on a standard subgradient based dual decomposition approach. In
practice however, this approach is often found to lead to
extremely slow convergence or even no convergence at all, one of
the reasons being the very difficult tuning of the stepsize
parameters. In this paper we propose a novel
improved dual decomposition approach inspired by recent
advances in mathematical programming. It uses a smoothing technique
for the Lagrangian combined with an optimal gradient based scheme
for updating the Lagrange multipliers. The stepsize parameters are
furthermore selected optimally removing the need for a tuning
strategy. With this approach we show how the convergence of current
state-of-the-art DSM algorithms based on iterative convex
approximations (SCALE, CA-DSB) can be improved by one order of
magnitude. Furthermore we apply the improved dual decomposition
approach to other DSM algorithms (OSB, ISB, ASB, (MS)-DSB, MIW) 
and propose further improvements to obtain fast and robust DSM
algorithms. Finally, we demonstrate the effectiveness of the
improved dual decomposition approach for a number of realistic
multi-user DSL scenarios. 
\end{abstract}
\vspace*{-4mm}
{\small \begin{center}
\textbf{EDICS}: SPC-TDLS Telephone networks and digital subscriber
loops, SPC-MULT Multi-carrier, OFDM, and DMT communications,
MSP-APPL Applications of MIMO communications and signal processing
\end{center}}


\section{Introduction}\label{sec:introduction}
Digital subscriber line (DSL) technology refers to a family of
technologies that provide digital broadband access over the local
telephone network. It is currently the dominating broadband access
technology with 66\% of all broadband access subscribers worldwide
using DSL to access the Internet \cite{dsldominates}. The major
obstacle for further performance improvement in modern DSL networks
is the so-called crosstalk, i.e. the electromagnetic interference
amongst different lines in the same cable bundle. Different lines
(i.e. users) indeed interfere with each other, leading to a very
challenging interference environment where proper management of the
resources is required to prevent a huge performance degradation.

Dynamic spectrum management (DSM) has been recognized as a key
technology to significantly improve the performance of DSL broadband
access networks \cite{DSM_Song}. The basic concept of DSM is to
coordinate transmission over multiple DSL lines so as to mitigate
the impact of crosstalk interference amongst them. There
are two types of coordination referred to as spectrum level and
signal level coordination. Here, we will focus on spectrum level
coordination, also referred to as spectrum management, spectrum
balancing or multi-carrier power control. Spectrum management aims
to allocate transmit spectra, i.e. transmit powers over all
available frequencies (tones), to the different users so as to
achieve some design objective. This generally corresponds to an
optimization problem, where typically a weighted sum of user data
rates is maximized subject to power constraints
\cite{optSpectrManagementJournal,dual_journal,tsiaflakis_bbosb},
which will be referred to as ``constrained weighted rate sum
maximization (cWRS)''. Recently this has been extended to other
design objectives as well, such as power driven designs (green
DSL\cite{Tsiaflakis2009a},\cite{energyDSLnordstrom},
\cite{greenCopper}) and other utility driven designs
\cite{DSMluo,Tsiaflakis2008}. As shown in \cite{Tsiaflakis2009a},
the key component to these designs is an efficient solution for the
cWRS problem. Therefore we will mainly focus on this problem and aim
to find a robust and efficient solution for it.

The cWRS problem is known to be an NP-hard, separable nonconvex
optimization problem, that can have many locally optimal solutions
\cite{DSMluo}\cite{dsb}. Even for moderately sized problems (with
5-20 users and 200-4000 tones), finding the globally optimal
solution is computationally prohibitive. In
\cite{optSpectrManagementJournal} and \cite{dual_journal} the
authors proposed to use a dual decomposition approach with a
standard subgradient based updating of the Lagrange multipliers.
Many DSM algorithms
\cite{optSpectrManagementJournal,tsiaflakis_bbosb,PBnB,ISB_Raphael,
dsb,scale,MIW,ASB,cioffi_DSB,iterativeWaterfilling} have been
proposed recently, almost all of them using this standard
subgradient based dual decomposition approach. In practice, however,
this approach is often found to lead to extremely slow convergence
or even no convergence at all, especially so for large DSL scenarios
with large crosstalk. One of the reasons is the very difficult
tuning of the stepsize parameters so as to guarantee fast
convergence.

In this paper we propose a novel improved dual decomposition
approach inspired by recent advances in mathematical
programming, more specifically the proximal center based
decomposition method recently proposed in \cite{Ion2008}. This
method uses a smoothing technique for the Lagrangian that preserves
separability of the problem, as recently proposed in
\cite{Nesterov2004}. The corresponding stepsize is determined in an
optimal way and so straightforwardly tuned. The method is
originally designed for separable convex problems, whereas DSM
optimization problems are highly nonconvex. In this paper we extend
the proximal center based decomposition method to an improved dual
decomposition approach for application in the context of DSM. With
this approach, we show how the convergence of current
state-of-the-art DSM algorithms based on iterative convex
approximations (SCALE\cite{scale}, CA-DSB\cite{dsb}) can be improved
by one order of magnitude. Furthermore we apply the improved dual
decomposition approach to other DSM algorithms
(OSB\cite{optSpectrManagementJournal},
PBnB\cite{PBnB}, ISB\cite{ISB_Raphael},
ASB\cite{ASB}, (MS-)DSB\cite{dsb}, MIW\cite{MIW},
BB-OSB\cite{tsiaflakis_bbosb}), again leading to
much faster converging DSM algorithms. Then we demonstrate an
important pitfall of applying dual decomposition to nonconvex DSM
problems and propose an effective solution for this that further
improves the robustness of current DSM algorithms. Finally we
demonstrate the effectiveness of the improved dual decomposition
approach for a number of realistic multi-user DSL scenarios.

This paper is organized as follows. In Section
\ref{sec:systemmodel}, the system model is introduced for the DSL
multi-user environment. In Section \ref{sec:dsm}, the basic cWRS
problem is described and existing DSM algorithms for this problem
are reviewed, all of them relying on a subgradient based dual
decomposition approach. In Section \ref{sec:itcvxapp} an improved
dual decomposition approach is proposed for DSM algorithms based on
iterative convex approximations. The improved dual decomposition
approach is furthermore applied to other DSM algorithms in Section
\ref{sec:generalDSM}. In Section \ref{sec:dualprimal}, the problem
of obtaining a primal solution from the dual solution is described
and an effective solution for it is proposed. Finally in Section
\ref{sec:sim_results}, simulation results are shown.\\

\section{System Model}\label{sec:systemmodel}
We consider a system consisting of $\mathcal{N}=\{1,\ldots,N\}$
interfering DSL users (i.e., lines, modems) with standard
synchronous discrete multi-tone (DMT) modulation with
$\mathcal{K}=\{1,\ldots,K\}$ tones (i.e., frequencies or carriers).
The transmission can be modeled independently on each tone $k$ by
\begin{displaymath}
\mathbf{y}_k = \mathbf{H}_k\mathbf{x}_k + \mathbf{z}_k.
\end{displaymath}
The vector $\mathbf{x}_k=[x^1_k, \ldots, x^N_k]^T$ contains
the transmitted signals on tone $k$, where $x_k^n$ refers to the
signal transmitted by user $n$ on tone $k$. Vectors $\mathbf{z}_k$
and $\mathbf{y}_k$ have similar structures; $\mathbf{z}_k$ refers
to the additive noise on tone $k$, containing thermal noise, alien
crosstalk, radio frequency interference (RFI), etc, and
$\mathbf{y}_k$ refers to the received signals on tone $k$.
$\mathbf{H}_k$ is an $N \times N$ channel matrix with
$[\mathbf{H}_k]_{n,m}=h^{n,m}_k$ referring to the channel gain from
transmitter $m$ to receiver $n$ on tone $k$. The diagonal elements
are the direct channels and the off-diagonal elements are the
crosstalk channels.

The transmit power of user $n$ on tone $k$, also referred to as
transmit power spectral density, is denoted as $s^n_k \triangleq
\Delta_f E\{|x^n_k|^2\}$, where $\Delta_f$ refers to the tone
spacing. The vector $\mathbf{s}_k \triangleq \{s_k^n, n \in
\mathcal{N}\}$ denotes the transmit powers of all users on tone
$k$. The vector $\mathbf{s}^n \triangleq \{s_k^n, k \in
\mathcal{K}\}$ denotes the transmit powers of user $n$ on all
tones. The received noise power by user $n$ on tone $k$, also
referred to as noise spectral density, is denoted as $\sigma^n_k
\triangleq \Delta_f E\{|z^n_k|^2\}$.

Note that we assume no signal coordination at the transmitters and
at the receivers, and that the interference is treated as additive
white Gaussian noise. Under this standard assumption the bit loading
for user $n$ on tone $k$, given the transmit spectra $\mathbf{s}_k$
of all users on tone $k$, is
\begin{equation} \label{eqn:bitloading}
b^n_k \triangleq b^n_k(\mathbf{s}_k) \triangleq \mathrm{log}_2
\Bigg(1 + \frac{1}{\Gamma}
\frac{|h^{n,n}_k|^2 s^n_k}{{\displaystyle \sum_{m\neq
n}}|h^{n,m}_k|^2 s^m_k + \sigma^n_k}\Bigg) ~ \mathrm{bits/Hz},
\end{equation}
where $\Gamma$ denotes the SNR-gap to capacity, which is a
function of the desired BER, the coding gain and noise margin
\cite{UnderstandingDSL}. The DMT symbol rate is denoted as $f_s$.
The achievable total data rate for user $n$ and the total power used
by user $n$ are equal to, respectively:
\begin{equation}
 \begin{array}{l}
  R^n \triangleq f_s {\displaystyle \sum_{k \in \mathcal{K}}}
b^n_k,\\
  P^n \triangleq {\displaystyle \sum_{k \in \mathcal{K}}} s^n_k.
 \end{array}
\end{equation}

 \section{Dynamic spectrum management}\label{sec:dsm}

\subsection{Dynamic spectrum management problem}
The basic goal of DSM through spectrum level coordination is to
allocate the transmit powers dynamically in response to physical
channel conditions (channel gains and noise) so as to pursue certain
design objectives and/or satisfy certain constraints. The
constraints are mostly per-user total power constraints and
so-called spectral mask constraints, i.e. 
\begin{equation}\label{eq:DSLstandard}
\begin{array}{l}
 P^n \leq P^{n,\mathrm{tot}} \qquad \qquad n \in \mathcal{N}
\qquad \qquad \ \ \
(\mathrm{total~power~constraints})\\
 0 \leq s_k^n \leq s_k^{n,\mathrm{mask}} \qquad n \in \mathcal{N},
k \in \mathcal{K} \qquad
(\mathrm{spectral~mask~constraints})
\end{array}
\end{equation}
where $P^{n,\mathrm{tot}}$ refers to the total available power
budget for user $n$ and $s_k^{n,\mathrm{mask}}$ refers to the
spectral mask constraint for user $n$ on tone $k$. The user total
power constraints can also be written in vector notation as 
$\mathbf{P} \leq \mathbf{P}^{\mathrm{tot}}$, where $\mathbf{P} =
[P^1,\ldots,P^N]^T$ and $\mathbf{P}^{\mathrm{tot}} =
[P^{1,\mathrm{tot}},\ldots,P^{N,\mathrm{tot}}]^T$, and where
'$\leq$' denotes a component-wise inequality.

The set of all possible data rate allocations that satisfy the
constraints (\ref{eq:DSLstandard}) can be characterized by the
achievable rate region $\mathcal{R}$:
\begin{equation*}\label{eq:rateregion}
\set{R} = \Big\{ (R^n: n\in \mathcal{N}) \vert R^n =  f_s \sum_{k
\in K} b^n_k(\mathbf{s}_k), \mathrm{~s.t.~(\ref{eq:DSLstandard})}
\Big\}.
\end{equation*}

A typical design objective is to achieve some Pareto optimal
allocation of the data rates $R^n$
\cite{iterativeWaterfilling,optSpectrManagementJournal, ISB_Raphael,
ISB_WeiYu,PBnB,tsiaflakis_bbosb,ASB,siwf,scale,MIW,dsb}. This
results in the following typical DSM optimization problem, which
will be referred as the constrained weighted rate sum maximization
(cWRS) formulation, where $w_n$ is the weight given to user $n$:
\begin{equation}\label{eq:DSM_wrate}
 \begin{array}{cl}
 {\displaystyle \max_{\{\mathbf{s}^n,n \in \mathcal{N}\}}} &
{\displaystyle \sum_{n \in \mathcal{N}}} w_n R^n\\
 \mathrm{s.t. } & P^n \leq P^{n,\mathrm{tot}}, \quad \qquad n \in
\mathcal{N},
\qquad \qquad \qquad (\mathrm{cWRS})\\
 & 0 \leq s^n_k \leq s^{n,\mathrm{mask}}_k, \quad n \in \mathcal{N},
k \in \mathcal{K}.
 \end{array}
 \end{equation}

However, many other DSM formulations are possible. We refer to
\cite{Tsiaflakis2009a} containing a collection of other relevant DSM
formulations. As shown in \cite{Tsiaflakis2009a}, the key component
to tackling these is an efficient solution for cWRS problem
(\ref{eq:DSM_wrate}). Therefore we will focus on this problem and
aim to find a robust and efficient solution for it.

\subsection{Dynamic spectrum management algorithms}
\label{sec:dsmalgo}
cWRS problem (\ref{eq:DSM_wrate}) is an NP-hard separable
nonconvex optimization problem \cite{DSMluo}. The number of
optimization variables is equal to $KN$, where the number of users
$N$ ranges between 2-100 and the number of tones $K$
can go up to 4000. Depending on the specific values of the channel
and noise parameters, there can be many locally optimal solutions,
that can differ significantly in value, as shown in \cite{dsb}. In
\cite{dual_journal} the authors show that strong duality holds for
the continuous (frequency range) formulation, and in \cite{DSMluo}
the authors prove asymptotic strong duality for the discrete
(frequency range) formulation, i.e. the duality gap goes to zero as
$K \rightarrow \infty$. These results suggest that a Lagrange dual
decomposition approach is a viable way to reach approximate
optimality for the discrete formulation (\ref{eq:DSM_wrate}), if the
frequency range is finely discretized, as it is indeed the case in
practical DSL scenarios where $K$ is large \cite{dual_journal}.
Many dual decomposition based DSM algorithms
\cite{optSpectrManagementJournal,tsiaflakis_bbosb,PBnB,ISB_Raphael,
dsb,scale,MIW , ASB,iterativeWaterfilling} have been proposed for
solving (\ref{eq:DSM_wrate}), almost all of them using a standard
subgradient based updating of the Lagrange multipliers.\\

The dual problem formulation of (\ref{eq:DSM_wrate}) consists of two
subproblems, namely a master problem
\begin{equation}\label{eq:master}
\begin{array}{cl}
 {\displaystyle \min_{\mathbf{\lambda}}} & g(\mathbf{\lambda})\\
 \mathrm{s.t.} & \mathbf{\lambda} \geq 0
\end{array}
\end{equation}
where $\mathbf{\lambda} = [\lambda_1, \ldots, \lambda_N]^T$, and a
slave problem defined by the dual function $g(\mathbf{\lambda})$:
\begin{equation}\label{eq:slave}
\begin{array}{l}
 g(\mathbf{\lambda}) = \mathcal{L}(\mathbf{\lambda}, \mathbf{s}_k,
k \in \mathcal{K})= \Bigg\{ \begin{array}{cl}
{\displaystyle \max_{\{\mathbf{s}_k, k \in \mathcal{K}\}}} &
{\displaystyle \sum_{n \in \mathcal{N}}} w_n R^n -
{\displaystyle \sum_{n \in \mathcal{N}}} \lambda_n (P^n -
P^{n,\mathrm{tot}})\\
\mathrm{s.t.} & 0 \leq s_k^n \leq s_k^{n,\mathrm{mask}}, \qquad
n \in \mathcal{N}, k \in \mathcal{K}, \end{array}
\end{array}
\end{equation}
where $\mathcal{L}(\mathbf{\lambda}, \mathbf{s}_k,
k \in \mathcal{K})$ is the Lagrangian. This can be
reformulated as:
\begin{equation}\label{eq:slave2}
\begin{array}{l}
 g(\mathbf{\lambda}) = \Bigg\{ \begin{array}{cl}
{\displaystyle \max_{\{\mathbf{s}_k, k \in \mathcal{K}\}}} &
{\displaystyle \sum_{k \in \mathcal{K}}} \Big\{ f_s
b_k(\mathbf{s_k}) - {\displaystyle \sum_{n \in \mathcal{N}}}
\lambda_n s_k^n \Big\} + {\displaystyle \sum_{n \in \mathcal{N}}}
\lambda_n P^{n,\mathrm{tot}}\\
\mathrm{s.t.} & 0 \leq s_k^n \leq s_k^{n,\mathrm{mask}}, \qquad
n \in \mathcal{N}, k \in \mathcal{K} \end{array}\\
\mathrm{with~~} b_k(\mathbf{s}_k) = {\displaystyle \sum_{n \in
\mathcal{N}}} w_n b_k^n(\mathbf{s}_k)
\end{array}
\end{equation}

The slave optimization problem (\ref{eq:slave2}) can then be
decomposed into $K$ independent nonconvex subproblems (dual
decomposition):
\begin{equation}\label{eq:slaves}
\begin{array}{l}
g(\mathbf{\lambda})={\displaystyle \sum_{k \in K}}
g_k(\mathbf{\lambda})\\
\mathrm{with~}g_k(\mathbf{\lambda}) = \Bigg\{
\begin{array}{cl}
{\displaystyle \max_{\mathbf{s}_k}} & f_s b_k(\mathbf{s}_k) -
{\displaystyle \sum_{n \in \mathcal{N}}} \lambda_n s_k^n +
{\displaystyle \sum_{n \in \mathcal{N}}} \lambda_n
P^{n,\mathrm{tot}}/K\\
\mathrm{s.t.} & 0 \leq s_k^n \leq s_k^{n,\mathrm{mask}}, \quad
n \in \mathcal{N}
\end{array}\\
\end{array}\\
\end{equation}

The master problem (\ref{eq:master}), also called the dual problem,
is a convex optimization problem. Its objective function, i.e. the 
dual function $g(\mathbf{\lambda})$, is however non-differentiable.
The reason for this non-differentiability is that the underlying
slave optimization problem (\ref{eq:slave}) can have multiple
globally optimal solutions for some values of the Lagrange
multipliers $\mathbf{\lambda}$. In
\cite{dual_journal}\cite{optSpectrManagementJournal} a subgradient
approach is proposed for this dual master problem, where the
subgradient is defined as,
\begin{equation}\label{eq:subgrad}
dg(\mathbf{\lambda}) \triangleq \sum_{k \in \mathcal{K}}
\mathbf{s}_k(\mathbf{\lambda}) - \mathbf{P}^{\mathrm{tot}}
\end{equation}
with $\mathbf{s}_k(\mathbf{\lambda})$ referring to the optimal
solution of (\ref{eq:slaves}) for given
Lagrange multipliers $\mathbf{\lambda}$, also called dual variables,
and the corresponding subgradient update is:
\begin{equation}\label{eq:subgradient}
 \mathbf{\lambda} = \Big[\mathbf{\lambda} + \delta (\sum_{k
\in \mathcal{K}} \mathbf{s}_k(\mathbf{\lambda}) -
\mathbf{P}^{\mathrm{tot}})\Big]^+,
\end{equation}
where $[\mathbf{x}]^+$ denotes the projection of $\mathbf{x} \in
\mathcal{R}^N$ onto $\mathcal{R}^N_+$, and where the stepsize
$\delta$ can be chosen using different procedures
\cite{dual_journal,tsiaflakis_bbosb}, e.g. $\delta = q/i$ where $q$
is the initial stepsize and $i$ is the iteration counter. By
iteratively applying (\ref{eq:subgradient}) and (\ref{eq:slaves}),
convergence to an optimal solution of (\ref{eq:master}) can be
achieved, i.e. $\mathbf{\lambda} \rightarrow \mathbf{\lambda}^*$,
for which the complementary conditions, $\lambda_n (\sum_{k \in
\mathcal{K}} \mathbf{s}^n_k(\mathbf{\lambda}) -
\mathbf{P}^{n,\mathrm{tot}}) = 0,\ n \in \mathcal{N}$, are satisfied
when strong duality ``holds'' ($K \rightarrow \infty$). This general
standard dual decomposition approach is visualized in Figure
\ref{fig:generalDSM}.

Note that the per-tone subproblems (\ref{eq:slaves}) are nonconvex
optimization problems. Many existing DSM algorithms differ only in
the way these subproblems are solved, where strategies are proposed
such as exhaustive discrete search
(OSB)\cite{optSpectrManagementJournal}, branch and bound search
(PBnB\cite{PBnB}, BB-OSB\cite{tsiaflakis_bbosb}), coordinate
descent discrete search (ISB)\cite{ISB_Raphael}\cite{ISB_WeiYu},
solving the KKT system (DSB\cite{dsb}, MIW\cite{MIW},
MS-DSB\cite{dsb}), and heuristic approximation (ASB\cite{ASB},
ASB2\cite{dsb}). 

\begin{figure}[!h]
\centering
\scalebox{0.6}{\input{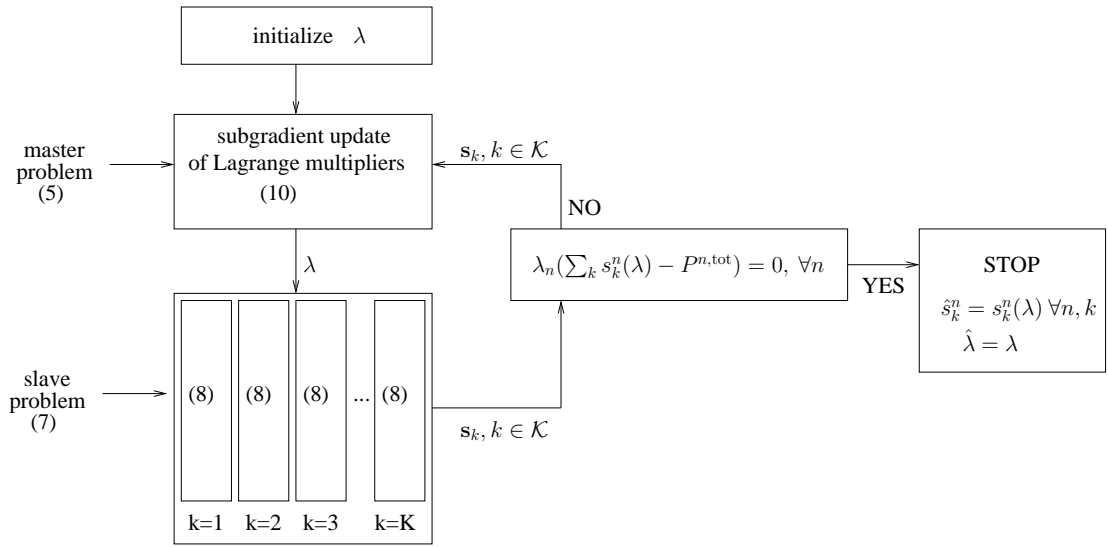}}
\caption{General structure of subgradient based dual decomposition
approach for DSM}
\label{fig:generalDSM}
\end{figure}

An alternative approach is based on iterative convex
approximations such as in SCALE\cite{scale} and CA-DSB\cite{dsb}.
This approach basically consists of iteratively executing the
following two steps: (i) approximating the nonconvex cWRS problem
(\ref{eq:DSM_wrate}) by a separable convex optimization problem
$F_{\mathrm{cvx}}$, and (ii) solving this convex
approximation by using a subgradient based dual decomposition
approach. Note that under some conditions on the approximation,
described in \cite{Chiang2007}, iteratively executing these steps
results in asymptotic convergence to a locally optimal solution of
cWRS (\ref{eq:DSM_wrate}). The convex approximations used by CA-DSB
and SCALE both satisfy these conditions. This approach is visualized
in Figure \ref{fig:itcvxDSM}, where $F_{k,\mathrm{cvx}}$ refers to
the per-tone convex problem obtained from the convex approximation
$F_{\mathrm{cvx}}$. We emphasize that these DSM algorithms
also use a subgradient based dual decomposition approach to solve a
convex optimization problem in each iteration. This step requires
the major part of the computational cost.

\begin{figure}[!h]
\centering
\scalebox{0.6}{\input{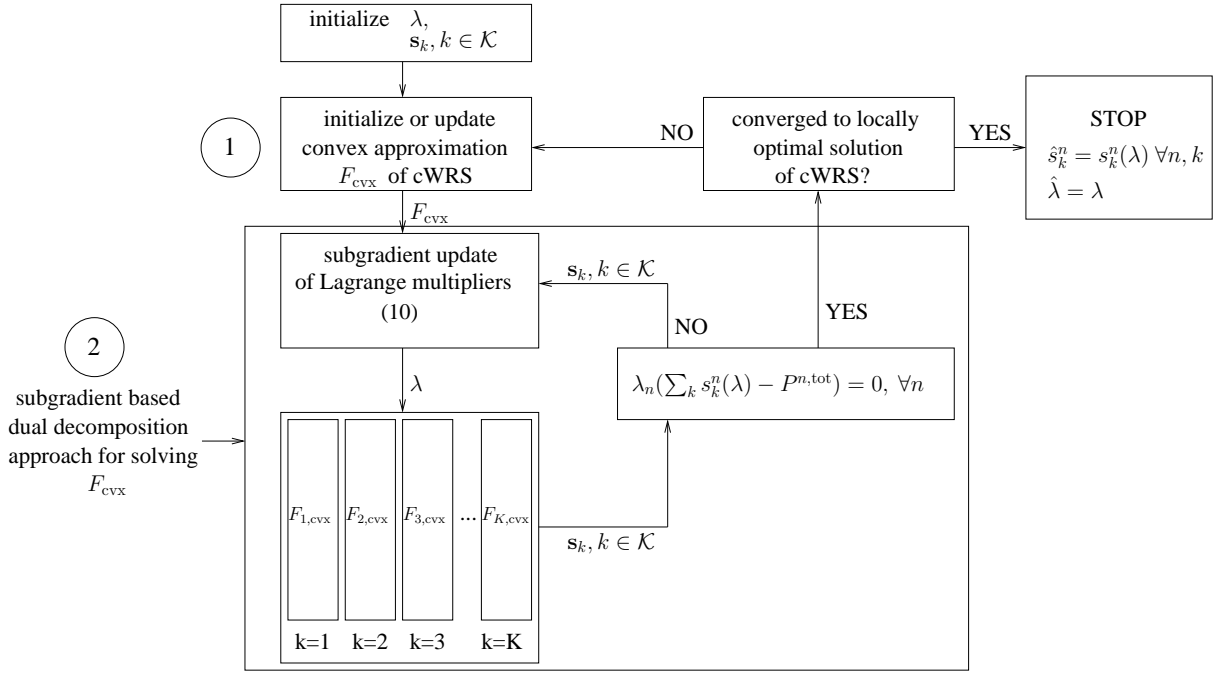}}
\caption{Structure of iterative convex approximation approach for
DSM}
\label{fig:itcvxDSM}
\end{figure}

\section{Improved dual decomposition}\label{sec:idd}
In practice, the standard subgradient based dual decomposition
approach is often found to lead to extremely slow convergence or
even no convergence at all, especially so for large DSL scenarios
(6-20 users) with large crosstalk (VDSL(2)). This is because of
different reasons: (i) subgradient methods are generally known not
to be efficient, i.e. showing worst case convergence of order
$\mathcal{O}(\frac{1}{\epsilon^2})$ with $\epsilon$ referring to the
required accuracy of the approximation of the optimum
\cite{Nesterov2004}, (ii) the stepsize used by subgradient methods
is quite difficult to tune in order to guarantee fast convergence,
(iii) the nonconvex nature of the problem implies that special care
should be taken in obtaining the optimal primal variables from the
optimal dual variables.

For separable convex problems, i.e. with a separable convex
objective function but with convex coupling constraints, several
alternative dual decomposition approaches have been proposed such as
the alternating direction method \cite{KonLeo:96}, proximal method
of multipliers \cite{CheTeb:94}, partial inverse method
\cite{Spi:85}, etc. Here, we focus on a recently proposed dual
decomposition approach in \cite{Ion2008}, referred to as the
proximal center based decomposition method. This method shows
interesting properties, namely it preserves separability of the
problem, it uses an optimal gradient based scheme, and it uses an
optimal stepsize which leads to straightforward tuning. In this
section we extend this method to
an improved dual decomposition approach for solving cWRS
(\ref{eq:DSM_wrate}). This approach will be used first in Section
\ref{sec:itcvxapp} to improve the convergence of DSM algorithms
using iterative convex approximations (SCALE, CA-DSB) with one order
of magnitude. In Section \ref{sec:generalDSM} this will be extended
to other DSM algorithms such as OSB, ISB, PBnB, BB-OSB, ASB,
(MS-)DSB, MIW, etc. We will refer to these DSM algorithms that are
not based on iterative convex approximations as ``direct DSM
algorithms''.


\subsection{An improved dual decomposition approach for iterative
convex approximation based DSM algorithms}\label{sec:itcvxapp} Two
state-of-the-art DSM algorithms that are based on iterative convex
approximations are SCALE and CA-DSB. These basically consist of two
steps as explained in Section \ref{sec:dsmalgo}, which are
iteratively executed. In this section we will propose an improved
dual decomposition approach for solving the convex optimization
problem in the second step. We will elaborate this for CA-DSB and
explain how its convergence speed is improved by one order of
magnitude, i.e. from $\mathcal{O}(\frac{1}{\epsilon^2})$ to 
$\mathcal{O}(\frac{1}{\epsilon})$.
The improved dual decomposition approach can similarly be applied to
the SCALE algorithm to obtain a similar speed up, but requires more
complicated notation because of the inherent exponential
transformation of variables.\\

For CA-DSB, the convex approximation in each iteration is obtained
by reformulating the objective of cWRS, as a sum of a concave part
and a convex part, and then approximating the convex part
by a first order Taylor expansion. The resulting convex
approximation, its dual formulation, dual function, and Lagrangian
are given in (\ref{eq:cvxapproxprimal}),
(\ref{eq:cvxapprox}), (\ref{eq:dualfunction}), and
(\ref{eq:lagrangian}), respectively.
\begin{eqnarray}
& f^*_{\mathrm{cvx}} = \big\{ {\displaystyle \max_{\{\mathbf{s}_k
\in \mathcal{S}_k, k \in \mathcal{K}\}}} {\displaystyle \sum_{k \in
\mathcal{K}}} b_{k,\mathrm{cvx}}(\mathbf{s}_k) \ \ \mathrm{s.t. }
{\displaystyle \sum_{k \in \mathcal{K}}} s_k^n \leq
P^{n,\mathrm{tot}}, \ n \in \mathcal{N}
\big\}\label{eq:cvxapproxprimal}\\
& {\displaystyle \min_{\mathbf{\lambda} \geq 0}}
\ g_{\mathrm{cvx}}(\mathbf{\lambda})\label{eq:cvxapprox}\\
& g_{\mathrm{cvx}}(\mathbf{\lambda}) =
{\displaystyle \max_{\{\mathbf{s}_k \in \mathcal{S}_k, k \in
\mathcal{K}\}}} \mathcal{L}_{\mathrm{cvx}}(\mathbf{s}_k, k \in
\mathcal{K}, \mathbf{\lambda})\label{eq:dualfunction}\\
& \mathcal{L}_{\mathrm{cvx}}(\mathbf{s}_k, k \in \mathcal{K},
\mathbf{\lambda}) = {\displaystyle \sum_{k \in \mathcal{K}}}
b_{k,\mathrm{cvx}}(\mathbf{s}_k) - {\displaystyle \sum_{k \in
\mathcal{K}}} \ {\displaystyle \sum_{n \in \mathcal{N}}} \lambda_n
s_k^n +  {\displaystyle \sum_{n \in \mathcal{N}}} \lambda_n
P^{n,\mathrm{tot}}
\label{eq:lagrangian}
\end{eqnarray}
where $\mathcal{S}_k = \{\mathbf{s}_k \in \mathcal{R}^n: 0 \leq
s_k^n \leq s_k^{n,\mathrm{max}}, n \in \mathcal{N} \}$ is a compact
convex set with
$s_k^{n,\mathrm{max}}:=\min(s_k^{n,\mathrm{mask}},P^{n,\mathrm{tot}}
)$ and $P^{n,\mathrm{tot}}<\infty$, and where $
b_{k,\mathrm{cvx}}(\mathbf{s}_k)$ is concave and given as:
\begin{equation}
 \begin{array}{l}
b_{k,\mathrm{cvx}}(\mathbf{s}_k) = {\displaystyle \sum_{n \in
\mathcal{N}}} w_n f_s \log_2({\displaystyle \sum_{m \in
\mathcal{N}}} |\tilde{h}_k^{n,m}|^2 s_k^m + \Gamma \sigma_k^n) -
{\displaystyle \sum_{n \in \mathcal{N}}} w_n f_s
({\displaystyle \sum_{m \neq n}} a_k^{m,n}s_k^m+c_k^n),\\
 \end{array}
\end{equation}
with $a_k^{n,m}, c_k^n, \forall n,m,k$ constant
approximation parameters, obtained by a closed-form formula in the
approximation step \cite{dsb}, and with
\begin{equation}
\begin{array}{l}
 |\tilde{h}_k^{n,m}|^2 \Bigg\{ \begin{array}{l} = \Gamma
|h_k^{n,m}|^2, \qquad n \neq m\\
 = |h_k^{n,m}|^2, \qquad n = m.
\end{array}
\end{array}
\end{equation}

The convex problem (\ref{eq:cvxapproxprimal}) has a separable
structure and so the standard way to solve it is by focusing on the
dual problem (\ref{eq:cvxapprox}) and using a subgradient update
approach for the dual variables. This subgradient based dual
decomposition approach is however known \cite{Nesterov2004} to have
a convergence speed of order $\mathcal{O}(\frac{1}{\epsilon^2})$,
where $\epsilon$ is the required accuracy for the approximation of
the optimum. In the sequel, it will be shown how the ``proximal
center based decomposition'' method from \cite{Ion2008} can be
adapted for solving the convex approximation, leading to a
scheme with convergence speed of order
$\mathcal{O}(\frac{1}{\epsilon})$, i.e. one order of magnitude
faster  but with the same computational complexity. The basic steps
in this result are as follows. First an approximated (smoothed) dual
function $\bar{g}_{\mathrm{cvx}}(\mathbf{\lambda})$ is defined that
can be chosen to be arbitrarily close to the original dual function
$g_{\mathrm{cvx}}(\mathbf{\lambda})$. Then it is proven that this
smoothed dual function $\bar{g}_{\mathrm{cvx}}$ is differentiable
and has a Lipschitz continuous gradient. Finally an optimal gradient
scheme is applied to this smoothed dual function.\\

We introduce the following functions $d_k(\mathbf{s}_k), k \in
\mathcal{K}$, which are called prox-functions in \cite{Ion2008} and
are defined as
follows:

\begin{definition}
A prox-function $d_k(\mathbf{s}_k)$ has the following properties:
\begin{itemize}
 \item $d_k(\mathbf{s}_k)$ is a non-negative continuous and strongly
convex function with convexity parameter $\sigma_{\mathcal{S}_k}$
 \item $d_k(\mathbf{s}_k)$ is defined for the compact convex set
$\mathcal{S}_k$
\end{itemize}
\end{definition}

An example of a valid prox-function is $d_k(\mathbf{s}_k)
=\frac{1}{2} \Vert\mathbf{s}_k\Vert^2$, which is also used in our
concrete implementations (see Section \ref{sec:sim_results}). As
many other valid prox-functions exist, and in order not to loose
generality, we continue with $d_k(\mathbf{s}_k)$. Since
$\mathcal{S}_k, k \in \mathcal{K},$ are compact and
$d_k(\mathbf{s}_k)$ are continuous, we can choose
finite and positive constants such that
\begin{equation}
 D_{\mathcal{S}_k} \geq \max_{\mathbf{s}_k \in \mathcal{S}_k}
d_{k}(\mathbf{s}_k), k \in \mathcal{K}.
\end{equation}

The prox-functions can be used to smoothen the dual function
$g_{\mathrm{cvx}}(\mathbf{\lambda})$ to obtain a smoothed dual
function $\bar{g}_{\mathrm{cvx}}(\mathbf{\lambda})$ as follows:
\begin{equation}\label{eq:approx_dual}
 \bar{g}_{\mathrm{cvx}}(\mathbf{\lambda}) = \max_{\{\mathbf{s}_k \in
\mathcal{S}_k, k \in \mathcal{K}\}} \sum_{k \in \mathcal{K}} \Big\{
b_{k,\mathrm{cvx}}(\mathbf{s}_k) - \sum_{n \in \mathcal{N}}
\lambda_n (s_k^n - P^{n,\mathrm{tot}}/K)- c d_{k}(\mathbf{s}_k)
\Big\},
\end{equation}
where $c$ is a positive smoothness parameter that will be defined
later in this section. By using a sufficiently  small value for $c$,
the smoothed dual function can be arbitrarily close to original dual
function. Note that we can also choose different parameters $c_k$
for each prox-term. The generalization is straightforward.

One useful property of the particular choice of prox-functions is
that they do not destroy the separability of the
objective function in (\ref{eq:approx_dual}), i.e.
\begin{equation}\label{eq:approx_dual2}
 \bar{g}_{\mathrm{cvx}}(\mathbf{\lambda}) = \sum_{k \in \mathcal{K}}
\bigg\{ \max_{\mathbf{s}_k \in \mathcal{S}_k}
b_{k,\mathrm{cvx}}(\mathbf{s}_k)
- \sum_{n \in \mathcal{N}} \lambda_n (s_k^n - P^{n,\mathrm{tot}}/K)-
c d_{k}(\mathbf{s}_k) \bigg\}.
\end{equation}

Denote by $\bar{\mathbf{s}}_{k,\mathrm{cvx}}(\mathbf{\lambda}),
k \in \mathcal{K},$ the optimal solution of the maximization problem
in (\ref{eq:approx_dual2}). The following theorem describes the
properties of the smoothed dual function
$\bar{g}_{\mathrm{cvx}}(\mathbf{\lambda})$:
\begin{theorem}[\hspace{-0.01mm}\cite{Ion2008}]\label{theorem:ineq}
The function $\bar{g}_{\mathrm{cvx}}(\mathbf{\lambda})$ is convex
and continuously differentiable at any $\mathbf{\lambda} \in
\mathcal{R}^n$. Moreover, its gradient $\nabla
\bar{g}_{\mathrm{cvx}}(\mathbf{\lambda}) = \sum_{k \in \mathcal{K}}
\bar{\mathbf{s}}_{k,\mathrm{cvx}}(\mathbf{\lambda}) -
\mathbf{P}^{\mathrm{tot}}$ is Lipschitz continuous with Lipschitz
constant $L_c = \sum_{k \in \mathcal{K}} \frac{1}{c
\sigma_{\mathcal{S}_k}}$.
The following inequalities also hold:
\begin{equation}
 \bar{g}_{\mathrm{cvx}}(\mathbf{\lambda}) \leq
g_{\mathrm{cvx}}(\mathbf{\lambda}) \leq
\bar{g}_{\mathrm{cvx}}(\mathbf{\lambda}) + c \sum_{k \in
\mathcal{K}} D_{\mathcal{S}_k} \qquad \mathbf{\lambda} \in
\mathcal{R}^n
\end{equation}
\end{theorem}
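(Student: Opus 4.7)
The plan is to invoke the standard Nesterov smoothing framework: all four claims reduce to elementary properties of the inner maximization in (\ref{eq:approx_dual2}). First I would dispose of the two-sided inequality, which is the easiest piece. Since $d_k(\mathbf{s}_k) \geq 0$ on $\mathcal{S}_k$, subtracting $c\,d_k$ from the integrand in (\ref{eq:approx_dual}) only decreases the value of the inner max, giving $\bar{g}_{\mathrm{cvx}}(\mathbf{\lambda}) \leq g_{\mathrm{cvx}}(\mathbf{\lambda})$. Conversely, evaluating the smoothed objective at the maximizer $\mathbf{s}_k^\star$ of the unperturbed problem and bounding $d_k(\mathbf{s}_k^\star) \leq D_{\mathcal{S}_k}$ yields $\bar{g}_{\mathrm{cvx}}(\mathbf{\lambda}) \geq g_{\mathrm{cvx}}(\mathbf{\lambda}) - c\sum_k D_{\mathcal{S}_k}$, which is the claimed upper bound.

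Next I would establish convexity and the gradient formula. The smoothed function is a pointwise supremum over $\mathbf{s}_k$'s of a map that is affine in $\mathbf{\lambda}$, hence convex. For differentiability I would appeal to Danskin's theorem: since $b_{k,\mathrm{cvx}}$ is concave and $-c\,d_k$ is strongly concave with modulus $c\,\sigma_{\mathcal{S}_k}$, the per-tone objective in (\ref{eq:approx_dual2}) is strongly concave in $\mathbf{s}_k$, so the maximizer $\bar{\mathbf{s}}_{k,\mathrm{cvx}}(\mathbf{\lambda})$ is unique. Uniqueness together with compactness of $\mathcal{S}_k$ yields continuity of the argmax (Berge's maximum theorem), and Danskin then delivers $\nabla \bar{g}_{\mathrm{cvx}}(\mathbf{\lambda}) = \sum_k \bar{\mathbf{s}}_{k,\mathrm{cvx}}(\mathbf{\lambda}) - \mathbf{P}^{\mathrm{tot}}$.

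The main obstacle is the Lipschitz estimate on $\nabla \bar{g}_{\mathrm{cvx}}$, which drives the $\mathcal{O}(1/\epsilon)$ rate. Write $\bar{\mathbf{s}}_k^{(i)} := \bar{\mathbf{s}}_{k,\mathrm{cvx}}(\mathbf{\lambda}_i)$ for $i=1,2$. Applying the first-order variational inequality for the $\mathbf{\lambda}_1$-problem tested against $\bar{\mathbf{s}}_k^{(2)}$, and the $\mathbf{\lambda}_2$-problem against $\bar{\mathbf{s}}_k^{(1)}$, then adding, the concave $b_{k,\mathrm{cvx}}$ contribution is nonnegative while strong monotonicity of $\nabla d_k$ gives
\begin{equation*}
c\,\sigma_{\mathcal{S}_k}\,\Vert \bar{\mathbf{s}}_k^{(1)} - \bar{\mathbf{s}}_k^{(2)} \Vert^2 \leq \langle \mathbf{\lambda}_2 - \mathbf{\lambda}_1, \bar{\mathbf{s}}_k^{(1)} - \bar{\mathbf{s}}_k^{(2)} \rangle.
\end{equation*}
Cauchy--Schwarz then produces $\Vert \bar{\mathbf{s}}_k^{(1)} - \bar{\mathbf{s}}_k^{(2)} \Vert \leq \frac{1}{c\,\sigma_{\mathcal{S}_k}} \Vert \mathbf{\lambda}_1 - \mathbf{\lambda}_2 \Vert$. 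Summing over $k$ and invoking the gradient formula from the previous step gives
\begin{equation*}
\Vert \nabla \bar{g}_{\mathrm{cvx}}(\mathbf{\lambda}_1) - \nabla \bar{g}_{\mathrm{cvx}}(\mathbf{\lambda}_2) \Vert \leq \Bigg(\sum_{k \in \mathcal{K}} \frac{1}{c\,\sigma_{\mathcal{S}_k}}\Bigg) \Vert \mathbf{\lambda}_1 - \mathbf{\lambda}_2 \Vert,
\end{equation*}
matching the claimed constant $L_c = \sum_k 1/(c\,\sigma_{\mathcal{S}_k})$. The delicate point is attributing the strong-convexity modulus entirely to the $-c\,d_k$ term -- since $b_{k,\mathrm{cvx}}$ is only concave and not strongly so, its gradient-difference term must be shown to have the correct sign and then discarded, which is precisely what makes the constant $1/(c\,\sigma_{\mathcal{S}_k})$ sharp per tone rather than loosened by extraneous factors.
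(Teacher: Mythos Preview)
Your argument is correct and complete. Note, however, that the paper itself does not actually prove this theorem: it is stated with a citation to \cite{Ion2008} and no proof is given in the text. What you have supplied is precisely the standard Nesterov-smoothing argument that underlies the cited result --- nonnegativity and boundedness of the prox-terms for the two-sided inequality, Danskin's theorem applied to a strongly concave inner problem for differentiability and the gradient formula, and the variational-inequality/strong-monotonicity computation for the Lipschitz constant. This is the expected route and matches the development in the reference.
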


The addition of the prox-functions thus leads to a convex
differentiable dual function with Lipschitz continuous gradient. Now
instead of solving the original dual problem (\ref{eq:cvxapprox}),
we focus on the following problem
\begin{equation}\label{eq:smoothdualproblem}
 \min_{\mathbf{\lambda} \geq 0}
\bar{g}_{\mathrm{cvx}}(\mathbf{\lambda})
\end{equation}
Note that, by making $c$ sufficiently small in
(\ref{eq:approx_dual2}), the solution of
(\ref{eq:smoothdualproblem}) can be made arbitrarily close to the
solution of (\ref{eq:cvxapprox}). Taking the particular structure
of (\ref{eq:smoothdualproblem}) into account, i.e. a differentiable
objective function with Lipschitz continuous gradient, we propose
the optimal gradient based scheme given in Algorithm
\ref{algo:icadsb}, derived from \cite{Ion2008}, for solving
(\ref{eq:cvxapproxprimal}). This algorithm will
be referred to as the \emph{improved dual decomposition algorithm}
for solving the convex approximation of CA-DSB
(\ref{eq:cvxapproxprimal}).

\begin{algorithm}
\caption{Improved dual decomposition algorithm for solving
(\ref{eq:cvxapproxprimal}) for CA-DSB}\label{algo:icadsb}
\begin{algorithmic}[1]
\STATE $i := 0$, tmp $:=0$ \STATE initialize $i_{\mathrm{max}}$
\STATE initialize $\mathbf{\lambda}^i$ \FOR{$i=0 \ldots
i_{\mathrm{max}}$}
\STATE $\forall k: \mathbf{s}_k^{i+1} =
{\displaystyle \argmax_{\{\mathbf{s}_k \in
\mathcal{S}_k\}}} \
b_{k,\mathrm{cvx}}(\mathbf{s}_k) - {\displaystyle \sum_{n \in
\mathcal{N}}} \mathbf{\lambda}^i_n s_k^n - c d_{k}(\mathbf{s}_k)$
\STATE $d\bar{g}_c^{i+1} = {\displaystyle \sum_{k \in \mathcal{K}}}
\mathbf{s}^{i+1}_k - \mathbf{P}^{\mathrm{tot}}$ \STATE
$\mathbf{u}^{i+1} =
[\frac{d\bar{g}_c^{i+1}}{L_c}+\mathbf{\lambda}^i]^+$ \STATE
$\mathrm{tmp} := \mathrm{tmp} + \frac{i+1}{2} d\bar{g}_c^{i+1}$
\STATE
$\mathbf{v}^{i+1} = [\frac{\mathrm{tmp}}{L_c}]^+$ \STATE
$\mathbf{\lambda}^{i+1} = \frac{i+1}{i+3} \mathbf{u}^{i+1} +
\frac{2}{i+3} \mathbf{v}^{i+1}$ \STATE $i:= i+1$ \ENDFOR \STATE
Build $\hat{\mathbf{\lambda}} =
\mathbf{\lambda}^{i_{\mathrm{max}}+1}$ and $\hat{\mathbf{s}}_k =
\sum_{i=0}^{i_{\mathrm{max}}}
\frac{2(i+1)}{(i_{\mathrm{max}}+1)(i_{\mathrm{max}}+2)}\mathbf{s}^{
i+1}_k$
\end{algorithmic}
\end{algorithm}

The specific value for $L_c$ depends on the chosen prox-function
$d_k(\mathbf{s}_k)$, as given in Theorem \ref{theorem:ineq}. The
specific value for $c$ will be defined later in Theorem
\ref{theorem:gap}. Note that lines $6$-$10$ of Algorithm
\ref{algo:icadsb} correspond to the improved Lagrange multiplier
updates. By comparing this with the standard subgradient Lagrange
multiplier update (\ref{eq:subgradient}), one can observe that the
standard and improved update require a similar complexity.

The remaining issue is to prove that $\hat{\mathbf{s}}_k, k \in
\mathcal{K},$ converges to an $\epsilon$-optimal solution in
$i_{\mathrm{max}}$ iterations where $i_{\mathrm{max}}$ is of the
order $\mathcal{O}(\frac{1}{\epsilon})$. For this we define the
following lemmas that will be used in the sequel.

\begin{lemma}
\label{genCS} For any $\mathbf{y} \in \mathcal{R}^n$ and $\mathbf{z}
\geq 0$, the following inequality holds\footnote{For the sake of an
easy exposition we consider in the paper only the Euclidian norm
$\Vert \cdot \Vert$, although other norms can also  be used (see
\cite{Ion2008} for a detailed exposition).}:
\begin{equation}
 \mathbf{y}^T \mathbf{z} \ \leq  \Vert \mathbf{[y]}^+\Vert
\Vert\mathbf{z}\Vert
\end{equation}
\end{lemma}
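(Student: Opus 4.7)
The plan is to decompose $\mathbf{y}$ into its positive and negative parts and exploit the sign constraint on $\mathbf{z}$ to reduce the inequality to a standard Cauchy--Schwarz application.

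First I would write $\mathbf{y} = [\mathbf{y}]^+ - [\mathbf{y}]^-$, where $[\mathbf{y}]^-$ denotes the componentwise negative part, so that both $[\mathbf{y}]^+ \geq 0$ and $[\mathbf{y}]^- \geq 0$ and their supports are disjoint. This gives
\begin{equation*}
\mathbf{y}^T \mathbf{z} \;=\; \bigl([\mathbf{y}]^+\bigr)^T \mathbf{z} \;-\; \bigl([\mathbf{y}]^-\bigr)^T \mathbf{z}.
\end{equation*}

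Next I would invoke the hypothesis $\mathbf{z} \geq 0$. Combined with $[\mathbf{y}]^- \geq 0$, this forces $\bigl([\mathbf{y}]^-\bigr)^T \mathbf{z} \geq 0$ (as a sum of products of nonnegative numbers), so the subtracted term can be dropped to obtain
\begin{equation*}
\mathbf{y}^T \mathbf{z} \;\leq\; \bigl([\mathbf{y}]^+\bigr)^T \mathbf{z}.
\end{equation*}

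Finally I would apply the Cauchy--Schwarz inequality to the right-hand side, yielding $\bigl([\mathbf{y}]^+\bigr)^T \mathbf{z} \leq \Vert [\mathbf{y}]^+\Vert \, \Vert \mathbf{z}\Vert$, which chains with the previous inequality to give the claim. There is no real obstacle here: the only subtlety is recognizing that the sign hypothesis on $\mathbf{z}$ is used precisely to discard the contribution from the negative part of $\mathbf{y}$, after which the bound is just Cauchy--Schwarz. The argument carries over verbatim to any norm and its dual provided one replaces Cauchy--Schwarz with the corresponding Hölder-type duality inequality, consistent with the footnote about general norms.
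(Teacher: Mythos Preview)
Your proof is correct and essentially identical to the paper's: both split $\mathbf{y}$ into its positive and negative parts (the paper does this via the index sets $\mathcal{I}^+$ and $\mathcal{I}^-$), discard the negative-part contribution using $\mathbf{z}\geq 0$, and finish with Cauchy--Schwarz.
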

\begin{proof}
Let us define the index sets $\mathcal{I}^{-} = \{i \in
\{1\ldots n\}: y_i<0 \}$ and $\mathcal{I}^{+} = \{i \in \{1\ldots
n\}: y_i \geq 0 \}$. Then,
\begin{align*}
\mathbf{y}^T \mathbf{z} = \sum_{i \in \mathcal{I}^{-}} y_i z_i +
\sum_{i \in \mathcal{I}^{+}} y_i z_i \leq  \sum_{i \in
\mathcal{I}^{+}} y_i z_i = ([\mathbf{y}]^+)^T \mathbf{z} \leq \Vert
\mathbf{[y]}^+\Vert \Vert\mathbf{z}\Vert.
\end{align*}
\\
\end{proof}

The following lemma provides a lower bound for the primal gap,
$f^*_{\mathrm{cvx}} - \sum_{k \in \mathcal{K}}
b_{k,\mathrm{cvx}}(\hat{\mathbf{s}}_k)$, of
(\ref{eq:cvxapproxprimal}):

\begin{lemma}\label{lemma:primal} Let $\mathbf{\lambda}^*$ be any
optimal Lagrange multiplier, then for any $\hat{\mathbf{s}}_k \in
\mathcal{S}_k, k \in \mathcal{K}$, the following lower bound on the
primal gap holds:
\begin{equation}\label{eq:primalgap}
f^*_{\mathrm{cvx}} - \sum_{k \in \mathcal{K}}
b_{k,\mathrm{cvx}}(\hat{\mathbf{s}}_k) \geq -
\Vert\mathbf{\lambda^*}\Vert  \Vert[\sum_{k \in \mathcal{K}}
\hat{\mathbf{s}}_k - \mathbf{P^{\mathrm{tot}}}]^+\Vert
\end{equation}
\end{lemma}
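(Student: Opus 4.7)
The plan is to combine weak (or strong) duality with Lemma \ref{genCS} applied to the inner product $(\mathbf{\lambda}^*)^T (\sum_k \hat{\mathbf{s}}_k - \mathbf{P}^{\mathrm{tot}})$. The convex approximation problem (\ref{eq:cvxapproxprimal}) is a concave maximization over a compact convex set with affine inequality constraints, so strong duality holds and $f^*_{\mathrm{cvx}} = g_{\mathrm{cvx}}(\mathbf{\lambda}^*)$; weak duality alone, $f^*_{\mathrm{cvx}} \leq g_{\mathrm{cvx}}(\mathbf{\lambda}^*)$, is actually enough for the direction of the inequality we need.

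First I would write out the dual function evaluated at $\mathbf{\lambda}^*$, using (\ref{eq:dualfunction})--(\ref{eq:lagrangian}), and use the fact that the maximum over $\{\mathbf{s}_k \in \mathcal{S}_k\}$ dominates the value at any admissible point $\hat{\mathbf{s}}_k \in \mathcal{S}_k$:
\begin{equation*}
f^*_{\mathrm{cvx}} \;\leq\; g_{\mathrm{cvx}}(\mathbf{\lambda}^*)
\;\leq\; \mathcal{L}_{\mathrm{cvx}}(\hat{\mathbf{s}}_k, k \in \mathcal{K}, \mathbf{\lambda}^*)
= \sum_{k \in \mathcal{K}} b_{k,\mathrm{cvx}}(\hat{\mathbf{s}}_k) - (\mathbf{\lambda}^*)^T \Bigl(\sum_{k \in \mathcal{K}} \hat{\mathbf{s}}_k - \mathbf{P}^{\mathrm{tot}}\Bigr).
\end{equation*}
Wait, the inequality direction needs a second look: the Lagrangian value at $\hat{\mathbf{s}}_k$ is a \emph{lower} bound on the max, so in fact $g_{\mathrm{cvx}}(\mathbf{\lambda}^*) \geq \mathcal{L}_{\mathrm{cvx}}(\hat{\mathbf{s}}_k, \mathbf{\lambda}^*)$, which combined with strong duality $f^*_{\mathrm{cvx}} = g_{\mathrm{cvx}}(\mathbf{\lambda}^*)$ gives the correct chain $f^*_{\mathrm{cvx}} \geq \mathcal{L}_{\mathrm{cvx}}(\hat{\mathbf{s}}_k, \mathbf{\lambda}^*)$. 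Rearranging this gives
\begin{equation*}
f^*_{\mathrm{cvx}} - \sum_{k \in \mathcal{K}} b_{k,\mathrm{cvx}}(\hat{\mathbf{s}}_k) \;\geq\; -(\mathbf{\lambda}^*)^T \Bigl(\sum_{k \in \mathcal{K}} \hat{\mathbf{s}}_k - \mathbf{P}^{\mathrm{tot}}\Bigr).
\end{equation*}

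The second step is to bound the right-hand side using Lemma \ref{genCS} with $\mathbf{y} := \sum_{k \in \mathcal{K}} \hat{\mathbf{s}}_k - \mathbf{P}^{\mathrm{tot}}$ and $\mathbf{z} := \mathbf{\lambda}^* \geq 0$ (nonnegativity of $\mathbf{\lambda}^*$ follows from the constraint in (\ref{eq:cvxapprox})). This yields $\mathbf{y}^T \mathbf{z} \leq \|[\mathbf{y}]^+\|\,\|\mathbf{z}\|$, i.e.\ $(\mathbf{\lambda}^*)^T (\sum_k \hat{\mathbf{s}}_k - \mathbf{P}^{\mathrm{tot}}) \leq \|[\sum_k \hat{\mathbf{s}}_k - \mathbf{P}^{\mathrm{tot}}]^+\|\,\|\mathbf{\lambda}^*\|$. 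Inserting this into the previous display yields (\ref{eq:primalgap}).

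There is no real obstacle here: the only subtle point is making sure we invoke strong duality (or equivalently the fact that $\mathbf{\lambda}^*$ is optimal and that the Lagrangian evaluated at any feasible primal point lower-bounds the dual function value) in the correct direction, and noting that $\mathcal{S}_k$ already enforces the spectral-mask and nonnegativity constraints so that only the total power constraints need to be dualized and bounded via the positive-part estimate.
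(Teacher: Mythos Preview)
Your corrected argument is exactly the paper's proof: use strong duality $f^*_{\mathrm{cvx}} = g_{\mathrm{cvx}}(\mathbf{\lambda}^*) \geq \mathcal{L}_{\mathrm{cvx}}(\hat{\mathbf{s}}_k,\mathbf{\lambda}^*)$, rearrange, and then apply Lemma~\ref{genCS} with $\mathbf{y}=\sum_k \hat{\mathbf{s}}_k - \mathbf{P}^{\mathrm{tot}}$ and $\mathbf{z}=\mathbf{\lambda}^*\geq 0$. Your initial remark that weak duality alone suffices is wrong (as you noticed), since the needed direction is $f^*_{\mathrm{cvx}} \geq \mathcal{L}_{\mathrm{cvx}}(\hat{\mathbf{s}}_k,\mathbf{\lambda}^*)$; just drop that aside and the proof is clean.
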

\begin{proof}
From the assumptions of the lemma we have
 \begin{equation} f^*_{\mathrm{cvx}} = \max_{\{{\mathbf{s}}_k \in
\mathcal{S}_k, k \in \mathcal{K}\}} \sum_{k \in
\mathcal{K}} b_{k,\mathrm{cvx}}({\mathbf{s}}_k)  -
\mathbf{\lambda^*}^T (\sum_{k \in \mathcal{K}} {\mathbf{s}}_k -
\mathbf{P^{\mathrm{tot}}}) \geq \sum_{k \in \mathcal{K}}
b_{k,\mathrm{cvx}}(\hat{\mathbf{s}}_k)  -
\mathbf{\lambda^*}^T (\sum_{k \in \mathcal{K}} \hat{\mathbf{s}}_k -
\mathbf{P^{\mathrm{tot}}})
\end{equation}
and then (\ref{eq:primalgap}) is obtained by applying Lemma
\ref{genCS}.\\
\end{proof}

From Lemma \ref{lemma:primal} it follows that if $\Vert[\sum_{k
\in \mathcal{K}} \hat{\mathbf{s}}_k -
\mathbf{P^{\mathrm{tot}}}]^+\Vert \leq \epsilon_c$, then the primal
gap is bounded, i.e. for all $\hat{\mathbf{\lambda}} \in
\mathcal{R}^N_+$
\begin{equation}
 - \epsilon_c \Vert\mathbf{\lambda}^*\Vert \leq f^*_{\mathrm{cvx}} -
\sum_{k \in \mathcal{K}} b_{k,\mathrm{cvx}}(\hat{\mathbf{s}}_k) \leq
g_{\mathrm{cvx}}(\hat{\mathbf{\lambda}}) - \sum_{k \in \mathcal{K}}
b_{k,\mathrm{cvx}}(\hat{\mathbf{s}}_k).
\end{equation}

Therefore, if we are able to derive an upper bound $\epsilon$ for
the dual gap, namely $g_{\mathrm{cvx}}(\hat{\mathbf{\lambda}}) -
\sum_{k \in \mathcal{K}} b_{k,\mathrm{cvx}}(\hat{\mathbf{s}}_k)$,
and an upper bound $\epsilon_c$ for the coupling constraints for
some given $\hat{\mathbf{\lambda}}$ ($\geq 0$) and
$\hat{\mathbf{s}}_k \in \mathcal{S}_k, \forall k,$ then we
can conclude that $\hat{\mathbf{s}}_k$ is an ($\epsilon,
\epsilon_c$)-solution for (\ref{eq:cvxapproxprimal}) (since in this
case $-\epsilon_c \Vert\mathbf{\lambda}^*\Vert \leq
f^*_{\mathrm{cvx}} - \sum_{k \in \mathcal{K}}
b_{k,\mathrm{cvx}}(\hat{\mathbf{s}}_k) \leq
\epsilon$). The next theorem derives these upper bounds for
Algorithm \ref{algo:icadsb} and provides a concrete value for $c$.

\begin{theorem}\label{theorem:gap}
 Let $\mathbf{\lambda}^*$ be an optimal Lagrange multiplier, taking
$c=\frac{\epsilon}{\sum_{k \in \mathcal{K}} D_{\mathcal{S}_k}}$ and
\newline
$i_{\mathrm{max}}+1 = 2 \sqrt{(\sum_k
\frac{1}{\sigma_{\mathcal{S}_k}})(\sum_k
D_{\mathcal{S}_k})}\frac{1}{\epsilon}$, then after
$i_{\mathrm{max}}$ iterations Algorithm \ref{algo:icadsb} obtains
an approximate solution $\hat{\mathbf{s}}_k, k \in \mathcal{K},$ to
the convex approximation (\ref{eq:cvxapproxprimal}) with a duality
gap less than $\epsilon$, i.e.
\begin{equation}
\label{gap} g_{\mathrm{cvx}}(\hat{\mathbf{\lambda}}) - \sum_{k \in
\mathcal{K}} b_{k,\mathrm{cvx}}(\hat{\mathbf{s}}_k) \leq \epsilon,
\end{equation}
and the constraints satisfy
\begin{equation}
\label{ineq-gap} \Vert [\sum_k \hat{\mathbf{s}}_k -
\mathbf{P}^{\mathrm{tot}}]^+\Vert \leq \epsilon
(\Vert\mathbf{\lambda}^*\Vert+\sqrt{\Vert\mathbf{\lambda}^{*}
\Vert^2+2} )
\end{equation}
\end{theorem}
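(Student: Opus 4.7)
My plan is to apply the convergence analysis of Nesterov's optimal gradient method, as adapted in \cite{Ion2008}, to the smoothed dual problem \eqref{eq:smoothdualproblem}, then translate the resulting bound on the smoothed dual into a bound on the original duality gap and on the constraint violation. The role of the two parameters is standard in smoothing arguments: $c$ is chosen to balance the smoothing error $c\sum_k D_{\mathcal{S}_k}$ against the gradient-method error $O(L_c/i_{\max}^2)$, and $i_{\max}$ is then fixed so that both contributions are of order $\epsilon$.

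First I would verify that lines $5$-$10$ of Algorithm \ref{algo:icadsb} are precisely the optimal gradient scheme applied to $\bar g_{\mathrm{cvx}}$: step $5$ evaluates $\nabla \bar g_{\mathrm{cvx}}(\mathbf{\lambda}^i)$ (via the maximizers $\mathbf{s}_k^{i+1}$, using Theorem \ref{theorem:ineq}), $\mathbf{u}^{i+1}$ is the projected gradient step with stepsize $1/L_c$, $\mathbf{v}^{i+1}$ is the projection of the weighted accumulated gradient history, and the convex combination forming $\mathbf{\lambda}^{i+1}$ with weights $\tfrac{i+1}{i+3},\tfrac{2}{i+3}$ is Nesterov's momentum update. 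The standard analysis of this scheme (see \cite{Ion2008,Nesterov2004}) yields an inequality of the form
\begin{equation*}
\sum_{i=0}^{i_{\max}} \tfrac{i+1}{2}\bigl[ \bar g_{\mathrm{cvx}}(\mathbf{\lambda}^i) + \langle \nabla \bar g_{\mathrm{cvx}}(\mathbf{\lambda}^i), \mathbf{\lambda}-\mathbf{\lambda}^i\rangle\bigr] \;\leq\; L_c\,\|\mathbf{\lambda}\|^2 + \tfrac{(i_{\max}+1)(i_{\max}+2)}{4}\bar g_{\mathrm{cvx}}(\hat{\mathbf{\lambda}}),
\end{equation*}
valid for all $\mathbf{\lambda}\geq 0$ (with $\mathbf{\lambda}^0=0$). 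The key next step is to exploit the definition of $\hat{\mathbf{s}}_k$ as the convex combination of the $\mathbf{s}_k^{i+1}$ with weights $\tfrac{2(i+1)}{(i_{\max}+1)(i_{\max}+2)}$: by concavity of $b_{k,\mathrm{cvx}}$ and Jensen's inequality, the left-hand side above dominates $\tfrac{(i_{\max}+1)(i_{\max}+2)}{4}\bigl[\sum_k b_{k,\mathrm{cvx}}(\hat{\mathbf{s}}_k) - \mathbf{\lambda}^T(\sum_k \hat{\mathbf{s}}_k - \mathbf{P}^{\mathrm{tot}})\bigr]$, up to a term controlled by the prox-functions that is absorbed into $c\sum_k D_{\mathcal{S}_k}$. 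Rearranging yields, for every $\mathbf{\lambda}\geq 0$,
\begin{equation*}
\bar g_{\mathrm{cvx}}(\hat{\mathbf{\lambda}}) - \sum_k b_{k,\mathrm{cvx}}(\hat{\mathbf{s}}_k) + \mathbf{\lambda}^T\Bigl(\sum_k \hat{\mathbf{s}}_k - \mathbf{P}^{\mathrm{tot}}\Bigr) \;\leq\; \frac{4L_c\,\|\mathbf{\lambda}\|^2}{(i_{\max}+1)^2}.
\end{equation*}

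From here the two bounds follow mechanically. Setting $\mathbf{\lambda}=0$ and using the smoothing estimate $g_{\mathrm{cvx}}(\hat{\mathbf{\lambda}})\leq \bar g_{\mathrm{cvx}}(\hat{\mathbf{\lambda}}) + c\sum_k D_{\mathcal{S}_k}$ from Theorem \ref{theorem:ineq} gives the duality-gap bound \eqref{gap}, provided $c\sum_k D_{\mathcal{S}_k}\leq \tfrac{\epsilon}{2}$ and the chosen $i_{\max}$ makes $4L_c/(i_{\max}+1)^2\leq \tfrac{\epsilon}{2}$; plugging in $c=\epsilon/\sum_k D_{\mathcal{S}_k}$ (so that $L_c=\tfrac{1}{c}\sum_k \tfrac{1}{\sigma_{\mathcal{S}_k}}$) and the stated $i_{\max}$ balances both terms to $\epsilon$. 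For the constraint-violation bound \eqref{ineq-gap}, I would maximize the left-hand side of the displayed inequality over $\mathbf{\lambda}\geq 0$ at a carefully chosen point of the form $\mathbf{\lambda}=\mathbf{\lambda}^* + t\,[\sum_k \hat{\mathbf{s}}_k - \mathbf{P}^{\mathrm{tot}}]^+/\|[\cdot]^+\|$, combine with the primal-gap lower bound from Lemma \ref{lemma:primal}, and solve the resulting quadratic in $\|[\sum_k \hat{\mathbf{s}}_k - \mathbf{P}^{\mathrm{tot}}]^+\|$, which produces the factor $\|\mathbf{\lambda}^*\|+\sqrt{\|\mathbf{\lambda}^*\|^2+2}$.

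The main obstacle, in my view, is the middle step: establishing the master inequality connecting the algorithm's iterates $\mathbf{\lambda}^i,\mathbf{u}^{i+1},\mathbf{v}^{i+1}$ to the averaged primal iterate $\hat{\mathbf{s}}_k$. This is where Nesterov's estimate-sequence machinery is used, and where the three separate Lagrange-multiplier variables in lines $6$-$10$ have to be matched to the telescoping argument. The remaining algebra (calibration of $c$ and $i_{\max}$, derivation of \eqref{ineq-gap} from Lemmas \ref{genCS} and \ref{lemma:primal}) is largely routine once this master inequality is in hand.
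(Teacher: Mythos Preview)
Your proposal is correct and follows essentially the same route as the paper: establish the Nesterov-type estimate-sequence inequality from \cite{Ion2008} for the iterates of Algorithm~\ref{algo:icadsb}, pass to the averaged primal point $\hat{\mathbf{s}}_k$ via concavity of $b_{k,\mathrm{cvx}}$, add the smoothing bound of Theorem~\ref{theorem:ineq}, and then extract the constraint-violation estimate by reducing to a quadratic inequality in $\Vert[\sum_k\hat{\mathbf{s}}_k-\mathbf{P}^{\mathrm{tot}}]^+\Vert$ using Lemma~\ref{lemma:primal}. One small clarification: when you set $\mathbf{\lambda}=0$ in your master inequality the term $4L_c\Vert\mathbf{\lambda}\Vert^2/(i_{\max}+1)^2$ vanishes, so the duality-gap bound \eqref{gap} comes \emph{entirely} from the smoothing error $c\sum_k D_{\mathcal{S}_k}=\epsilon$ and no $\epsilon/2$ splitting is needed; the choice of $i_{\max}$ enters only in \eqref{ineq-gap}, where the paper performs the minimization over $\mathbf{\lambda}\geq 0$ explicitly (yielding $-\tfrac{(i_{\max}+1)^2}{8L_c}\Vert[\cdot]^+\Vert^2$) rather than evaluating at a hand-picked $\mathbf{\lambda}$ as you suggest.
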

\begin{proof}
Using a similar reasoning as in Theorem 3.4 in \cite{Ion2008} we
can show that for any $c$ the following inequality holds:
\[
\bar{g}_{\mathrm{cvx}}(\hat{\mathbf{\lambda}}) \leq
\min_{{\mathbf{\lambda}} \geq 0}  \big \{ \frac{2
L_c}{(i_{\mathrm{max}}+1)^2}  \Vert {\mathbf{\lambda}} \Vert^2 +
\sum_{i=0}^{i_{\mathrm{max}}}
\frac{2(i+1)}{(i_{\mathrm{max}}+1)(i_{\mathrm{max}}+2)}
[\bar{g}_{\mathrm{cvx}}(\lambda^i) + (\nabla
\bar{g}_{\mathrm{cvx}}(\lambda^i))^T (\lambda - \lambda^i)] \big\}
\]
Replacing  $\bar{g}_{\mathrm{cvx}}(\lambda^i)$ and $\nabla
\bar{g}_{\mathrm{cvx}}(\lambda^i)$ by their expressions given in
(\ref{eq:approx_dual}) and Theorem \ref{theorem:ineq},
respectively, and taking into account that the functions
$b_{k,\mathrm{cvx}}$ are concave, we obtain the following
inequality:
\begin{align*}
g_{\mathrm{cvx}}(\hat{\mathbf{\lambda}}) - \sum_{k \in \mathcal{K}}
b_{k,\mathrm{cvx}}(\hat{\mathbf{s}}_k) & \leq  c(\sum_{k \in
\mathcal{K}} D_{\mathcal{S}_k}) + \min_{{\mathbf{\lambda}} \geq 0} 
\big \{
\frac{2 L_c}{(i_{\mathrm{max}}+1)^2}  \Vert {\mathbf{\lambda}}
\Vert^2 - \langle  \lambda , \sum_k \hat{\mathbf{s}}_k -
\mathbf{P}^{\mathrm{tot}} \rangle      \big \}\\
 & = c(\sum_{k \in
\mathcal{K}} D_{\mathcal{S}_k}) - \frac{(i_{\mathrm{max}}+1)^2}{8
L_c} \Vert
[\sum_k \hat{\mathbf{s}}_k - \mathbf{P}^{\mathrm{tot}}]^+  \Vert^2
\leq c(\sum_{k \in \mathcal{K}} D_{\mathcal{S}_k}).
\end{align*}
By taking $c=\frac{\epsilon}{\sum_{k \in \mathcal{K}}
D_{\mathcal{S}_k}}$, we obtain \eqref{gap}. For
the constraints using Lemma \ref{lemma:primal}  and the previous
inequality  we get that $  \Vert [\sum_k \hat{\mathbf{s}}_k -
\mathbf{P}^{\mathrm{tot}}]^+\Vert$ satisfies the second order
inequality in $y$:  $ \frac{(i_{\mathrm{max}}+1)^2}{8 L_c} y^2  -
\|\lambda^*\| y -\epsilon \leq 0$. Therefore, $\Vert [\sum_k
\hat{\mathbf{s}}_k - \mathbf{P}^{\mathrm{tot}}]^+\Vert$ must be less
than the largest root of the corresponding second-order equation,
i.e.
\[ \Vert [\sum_k \hat{\mathbf{s}}_k -
\mathbf{P}^{\mathrm{tot}}]^+\Vert \leq \big ( \| \lambda^*\| +
\sqrt{\|\lambda^*\|^2 + \frac{\epsilon(i_{\mathrm{max}}+1)^2}{2
L_c}} \big ) \frac{4 L_c}{(i_{\mathrm{max}}+1)^2}.
\]
By taking $i_{\mathrm{max}}=2 \sqrt{(\sum_k
\frac{1}{\sigma_{\mathcal{S}_k}})(\sum_k
D_{\mathcal{S}_k})}\frac{1}{\epsilon}-1$, we obtain
\eqref{ineq-gap}.
\end{proof}
~\\
From Theorem \ref{theorem:gap} we can conclude that by taking
$c=\frac{\epsilon}{\sum_{k \in \mathcal{K}} D_{\mathcal{S}_k}}$,
Algorithm
\ref{algo:icadsb} converges to a solution with duality gap less than
$\epsilon$ and the constraints violation satisfy $\Vert [\sum_k
\hat{\mathbf{s}}_k - \mathbf{P}^{\mathrm{tot}}]^+\Vert \leq \epsilon
(\Vert\mathbf{\lambda}^*\Vert+\sqrt{\Vert\mathbf{\lambda}^{*}
\Vert^2+2})$ after $i_{\mathrm{max}} = 2 \sqrt{(\sum_k
\frac{1}{\sigma_{\mathcal{S}_k}})(\sum_k
D_{\mathcal{S}_k})}\frac{1}{\epsilon}-1$
iterations, i.e. the convergence speed is of the order
$\mathcal{O}(\frac{1}{\epsilon})$.

Note that Algorithm \ref{algo:icadsb} provides a fully automatic
approach, i.e. it requires no stepsize tuning, which is otherwise
known to be a very difficult and crucial process. Finally note that
combining this algorithm with an outer loop that iteratively updates
the convex approximations leads to an overall procedure that
converges to a local maximizer of the nonconvex problem cWRS
\cite{Chiang2007}\cite{dsb}. The extension of CA-DSB with the
improved dual decomposition approach will be referred to as
Improved CA-DSB (I-CA-DSB).\\

A final remark on Algorithm \ref{algo:icadsb} is that the
independent convex per-tone problems (line 5 of Algorithm
\ref{algo:icadsb}) are slightly modified with respect to the
standard per-tone problems for CA-DSB. This is a consequence of the
addition of the extra prox-function term. One can use
state-of-the-art iterative methods (e.g. Newton's method) to solve
this convex subproblem with guaranteed convergence. An alternative
consists in using an iterative fixed point update approach, which is
shown to work well, with very small complexity, and is easily
extended to distributed implementation by using a protocol
\cite{scale}\cite{dsb}. The fixed point update formula for the
transmit powers $s_k^n$ used by CA-DSB can be adapted so
as to take the extra prox-term into account. Following the same
procedure as explained in \cite{dsb}, consisting of a fixed point
reformulation of the corresponding KKT stationarity condition of
(\ref{eq:cvxapprox}), we obtain the following transmit power update
formula, that only differs in the presence of the term PROX:
\begin{equation}\label{eq:cadsb_power}
s_k^n  = \Bigg[ \bigg( \frac{w_n
f_s/\log(2)}{\lambda_n + \underbrace{2 c s_k^n}_{\mathrm{PROX}}
+{\displaystyle \sum_{m \neq n}} \omega_m f_s a_k^{n,m}-
{\displaystyle \sum_{m\neq n}} \frac{w_m f_s \Gamma
|h_k^{m,n}|^2/\log(2)}{{\displaystyle \sum_p} |\tilde{h}_k^{m,p}|^2
s_k^p+ \Gamma \sigma_k^m}} \bigg) - \frac{{\displaystyle \sum_{m
\neq n}} \Gamma |h_k^{n,m}|^2 s_k^{m} + \Gamma
\sigma_k^n}{|h_k^{n,n}|^2}\Bigg]_0^{s_k^{n,\mathrm{mask}}}.
\end{equation}

Providing convergence conditions for this type of iterative fixed
point updates is outside the scope of this paper. In
\cite{ASB,dsb,iterativeWaterfilling}, convergence is proven under
certain conditions, and demonstrated for realistic DSL scenarios.
This leads to an alternative and fast way of implementing line $5$
of Algorithm \ref{algo:icadsb}, as specified in Algorithm
\ref{algo:itfixpoint}. The number of iterations in line 2 is
typically fixed at $3$. Note that a distributed solution is also
possible for the full scheme as the dual decomposition approach is
decoupled over the users. (see \cite{dsb} for more details). 
\begin{algorithm}
\caption{Iterative fixed point update approach for solving line
$5$ of Algorithm \ref{algo:icadsb}}\label{algo:itfixpoint}
\begin{algorithmic}[1]
\FOR{$k = 1 \ldots K$}
\FOR{iterations}
\FOR{$n = 1 \ldots N$}
\STATE $s_k^n = $(\ref{eq:cadsb_power})
\ENDFOR
\ENDFOR
\ENDFOR
\end{algorithmic}
\end{algorithm}

As mentioned, although the improved dual decomposition approach
has been elaborated for CA-DSB, it can similarly be applied to
other DSM algorithms based on iterative convex approximations, like
for instance SCALE, with a similar speed up of convergence. In this
case the prox-function can be taken as $d_k(\mathbf{s}_k) = \Vert
\mathbf{s}_k \Vert^2$, resulting in concrete values for $c$,
$i_{\mathrm{max}}$ and $L_c$. The extension of SCALE with the
improved dual decomposition approach will be referred to as Improved
SCALE (I-SCALE).

\subsection{An improved dual decomposition approach for direct DSM
algorithms}\label{sec:generalDSM}

In this section we extend the improved dual decomposition
approach to direct DSM algorithms such as OSB, ISB, ASB, (MS-)DSB,
MIW, etc, corresponding to the structure visualized in Figure
\ref{fig:generalDSM}. Using a similar trick as in Section
\ref{sec:itcvxapp}, we define a smoothed dual function
$\bar{g}(\mathbf{\lambda})$ as follows
\begin{equation}
\bar{g}(\mathbf{\lambda}) = \Bigg\{
\begin{array}{cl}
{\displaystyle \max_{\mathbf{s}_k \in \mathcal{S}_k, k \in
\mathcal{K}}} & {\displaystyle \sum_{k \in \mathcal{K}}} f_s
b_k(\mathbf{s}_k) - {\displaystyle \sum_{k \in \mathcal{K}} \sum_{n
\in \mathcal{N}}} \lambda_n s_k^n + {\displaystyle \sum_{n \in
\mathcal{N}}} \lambda_n P^{n,\mathrm{tot}} - {\displaystyle \sum_{k
\in \mathcal{K}}} c d_k(\mathbf{s}_k)\\
\mathrm{s.t.} & 0 \leq s_k^n \leq s_k^{n,\mathrm{mask}}, \quad
k \in \mathcal{K}, n \in \mathcal{N},
\end{array}
\end{equation}
where $d_k(\mathbf{s}_k)$ is a prox-function, which for
instance can be chosen as $d_k(\mathbf{s}_k)=\Vert \mathbf{s}_k
\Vert^2$, and $c=\frac{\epsilon}{\sum_{k \in \mathcal{K}}
D_{\mathcal{S}_k}}$,
with $\epsilon$ the required accuracy, and $L_c = \sum_{k \in
\mathcal{K}} \frac{1}{c \sigma_{\mathcal{S}_k}}$.

Note that by choosing a sufficiently small value for $c$, the
smoothed dual function $\bar{g}(\mathbf{\lambda})$ can be made
arbitrarily close to the original dual function
$g(\mathbf{\lambda})$,
i.e. $\bar{g}(\mathbf{\lambda}) \approx g(\mathbf{\lambda})$.

This results in the \emph{improved dual decomposition approach for
direct DSM algorithms}, given in Algorithm \ref{algo:idsm}, where
line 4 uses the following optimization problem:
\begin{equation}\label{eq:islaves}
\begin{array}{ll}
\tilde{\mathbf{s}}_k(\mathbf{\lambda}) =
& {\displaystyle \argmax_{\mathbf{s}_k}} f_s
b_k(\mathbf{s}_k) - {\displaystyle \sum_{n \in \mathcal{N}}}
\lambda_n s_k^n - c d_k(\mathbf{s}_k)\\
& \mathrm{s.t.~} 0 \leq s_k^n \leq s_k^{n,\mathrm{mask}}, \quad
n \in \mathcal{N},
\end{array}
\end{equation}

\begin{algorithm}
\caption{Improved dual decomposition approach for direct
DSM algorithms}\label{algo:idsm}
\begin{algorithmic}[1]
\STATE $i := 0$, tmp $:=0$
\STATE initialize $\mathbf{\lambda}^i$ and $\epsilon_a$ (desired
accuracy)
\WHILE{$\exists n:(\mathrm{abs}(\lambda_n^i
({\displaystyle \sum_{k \in \mathcal{K}}} s_k^n-P^{n,\mathrm{tot}}))
\geq \epsilon_a)$}
\STATE $\forall k:$ $\mathbf{s}_k^{i+1} =
\tilde{\mathbf{s}}_k(\mathbf{\lambda^i})$ obtained by solving
(\ref{eq:islaves})
\STATE $dg^{i+1} = {\displaystyle \sum_{k \in \mathcal{K}}}
\mathbf{s}^{i+1}_k - \mathbf{P}^{\mathrm{tot}}$
\STATE $\mathbf{u}^{i+1} =
[\frac{dg^{i+1}}{L_c}+\mathbf{\lambda}^i]^+$
\STATE $\mathrm{tmp} := \mathrm{tmp} + \frac{i+1}{2} dg^{i+1}$
\STATE $\mathbf{v}^{i+1} = [\frac{\mathrm{tmp}}{L_c}]^+$
\STATE $\mathbf{\lambda}^{i+1} = \frac{i+1}{i+3} \mathbf{u}^{i+1}
+ \frac{2}{i+3} \mathbf{v}^{i+1}$
\STATE $i := i + 1$
\ENDWHILE
\STATE Build $\hat{\mathbf{\lambda}} = \mathbf{\lambda}^{i}$ and
$\hat{\mathbf{s}}_k = \mathbf{s}_k^{i}, \forall k \in
\mathcal{K}$
\end{algorithmic}
\end{algorithm}

Algorithm \ref{algo:idsm} uses a similar optimal gradient based
scheme on the smoothed Lagrangian as in Algorithm \ref{algo:icadsb}.
Again no stepsize tuning is needed. Besides the improved
updating procedure for the Lagrange multipliers (lines 5-9), it
involves a slightly different decomposed per-tone problem
(\ref{eq:islaves}) (line 4). This can be solved by using a discrete
exhaustive search similar to OSB, a discrete coordinate descent
method similar to ISB, or a KKT system approach similar to
DSB/MIW/MS-DSB using (\ref{eq:cadsb_power}), where
$a_k^{n,m} = \frac{\Gamma |h_k^{m,n}|^2/\log(2)}{\sum_{p \neq m}
\Gamma |h_k^{m,p}|^2 s_k^p + \Gamma \sigma_k^m}$ \cite{dsb}. One can
also use a virtual reference length approach similar to ASB, ASB2.
Note that for ASB, and when using $d_k(\mathbf{s}_k)=\Vert
\mathbf{s}_k\Vert^2$, this increases the complexity as a polynomial
equation of degree 4 is then to be solved instead of a cubic
equation. Depending on the choice of the algorithm for solving the
per-tone problem, there will be a trade-off in complexity versus
performance \cite{dsb}. We will again add the prefix 'I-' to refer
to these algorithms using the improved dual decomposition approach,
i.e. I-OSB,I-ISB, I-DSB/MIW, I-MS-DSB, I-ASB.\\

The main difference of Algorithm \ref{algo:idsm} is that line 4
now involves $K$ nonconvex optimization problems, while line 5 of
Algorithm \ref{algo:icadsb} involves $K$ (strong) convex
optimization problems. As a consequence, the smoothed dual function
$\bar{g}(\mathbf{\lambda})$ is not necessarily differentiable and
its gradient is not necessarily Lipschitz continuous. More
specifically, this is the case when $\bar{g}(\mathbf{\lambda})$ has
multiple globally optimal solutions for a given Lagrange multiplier
$\mathbf{\lambda}$. This specific condition however mainly occurs
for a particular type of DSL scenarios which are analyzed and
discussed in Section \ref{sec:dualprimal}. For these scenarios the
worst case convergence of order $\mathcal{O}(\frac{1}{\epsilon})$
can not be guaranteed, as in Theorem \ref{theorem:gap}, but still we
can expect an improved convergence behaviour with respect to the
standard subgradient approach. Except for these specific cases, and
so for most practical DSL scenarios, the smoothed dual
function $\bar{g}(\mathbf{\lambda})$ will be differentiable and
Lipschitz continuous, and so a worst case convergence speed of
$\mathcal{O}(\frac{1}{\epsilon})$ is guaranteed. For instance, in
\cite{Tsiaflakis2008a} conditions on the channel and noise
parameters were given under which cWRS can be ``convexified''. For
these conditions, differentiability and Lipschitz continuity holds
for $\bar{g}(\mathbf{\lambda})$ and so application of Algorithm
\ref{algo:idsm} will provide a worst case convergence of
$\mathcal{O}(\frac{1}{\epsilon})$.


\section{An interleaving procedure for recovering the primal
solution from the dual solution}\label{sec:dualprimal}

The subgradient based dual decomposition approach for solving
problem cWRS (\ref{eq:DSM_wrate}) as well as the improved dual
decomposition approach presented in Sections \ref{sec:itcvxapp} and
\ref{sec:generalDSM}, converge to the optimal dual variables.
However, because of the nonconvex nature of cWRS, extra care must be
taken when recovering the optimal primal solution, i.e. optimal
transmit powers $\mathbf{s}_k^*, k \in \mathcal{K},$ for
(\ref{eq:DSM_wrate}), from the optimal dual variables
$\mathbf{\lambda}^*$, as was also mentioned in
\cite{dual_journal}\cite{Boyd2004}. The fact that the objective
function of cWRS is not strictly concave, can result in cases where
the optimal $\mathbf{s}_k(\mathbf{\lambda}^*), k \in \mathcal{K},$
that solves (\ref{eq:slave2}) is not unique, leading to multiple
solutions $\mathbf{s}_k(\mathbf{\lambda}^*), k \in \mathcal{K},$ for
given optimal dual variables $\mathbf{\lambda}^*$. Formally this can
be expressed as follows:
\begin{equation}
 \begin{array}{l}
 \{\mathbf{s}_k(\mathbf{\lambda}^*), k \in \mathcal{K}\} \in
\mathcal{B}=\{(\tilde{\mathbf{s}}_{k,1}, k \in
\mathcal{K}), \ldots ,(\tilde{\mathbf{s}}_{k,|\mathcal{B}|}, k \in
\mathcal{K})\}\\ \mathrm{~with~} \tilde{\mathbf{s}}_{k,m} \in
\mathcal{S}_k, k \in \mathcal{K}, \mathrm{~and~}
\mathcal{L}(\tilde{\mathbf{s}}_{k,m}, k \in \mathcal{K},
\mathbf{\lambda}^*) = {\displaystyle \max_{\{\mathbf{s}_k \in
\mathcal{S}_k, k \in \mathcal{K}\}}} \mathcal{L}(\mathbf{s}_k, k \in
\mathcal{K}, \mathbf{\lambda}^*), \quad m \in \{1, \ldots,
|\mathcal{B}| \},
 \end{array}
\end{equation}
where the cardinality of set $\mathcal{B}$ is larger than 1, i.e.
$|\mathcal{B}|>1$. It is important to note that the elements of
$\mathcal{B}$ are not necessarily solutions to (\ref{eq:DSM_wrate}),
i.e. they do not necessarily satisfy the user total power
constraints (\ref{eq:DSLstandard}). However, there exists at least
one element in set $\mathcal{B}$ that does satisfy the total power
constraints \cite{dual_journal}. In order to obtain convergence to a
primal optimal solution for (\ref{eq:DSM_wrate}) in the case that
$|\mathcal{B}|>1$, the dual decomposition approach has to be
extended with an extra procedure that chooses an element out of set
$\mathcal{B}$ that satisfies the user total power constraints. 

A simple example may be given to clarify this issue; suppose we have
a DSL scenario consisting of two users ($N=2$) and two tones
($K=2$), where the channel matrices (direct and crosstalk
components) and noise components for the two tones are the same,
i.e. $\mathbf{H}_1 = \mathbf{H}_2$ and $\sigma_1^n=\sigma_2^n, \ n
\in \mathcal{N}$, and the weights are also the same $w_1 = w_2$.
Furthermore suppose the crosstalk components are very large. In this
case, there will be only one user active on each tone
\cite{luo_dsm}. Finally suppose that the optimal dual variables
$\lambda_1^*,\lambda_2^*$, where $\lambda_1^* = \lambda_2^*$, are
given and the total power constraints are $P^n\leq \mathrm{ON}$,
where $\mathrm{ON}$ is a fixed power level. For this setup there
will be 4 possible solutions to (\ref{eq:slave2}), namely
$\{s_1^1=\mathrm{ON},s_2^1=\mathrm{ON},s_1^2=0,s_2^2=0\},\{s_1^1=0,
s_2^1=0,s_1^2=\mathrm{ON},s_2^2=\mathrm{ON}\},
\{s_1^1=\mathrm{ON},s_2^1=0,s_1^2=0,s_2^2=\mathrm{ON}\},
\{s_1^1=0,s_2^1=\mathrm{ON},s_1^2=\mathrm{ON},s_2^2=0\}$. Note that
all these solutions correspond to exactly the same objective value
but only the last two solutions are primal optimal solutions 
as they satisfy the user total power constraints. Typical DSM
algorithm implementations, however, have a fixed exhaustive search
order or iteration order over tones so that one of the two first
solutions may be selected and, as a consequence, these algorithms
will not provide the primal optimal solutions of
(\ref{eq:DSM_wrate}). To obtain convergence to the optimal primal
variables of (\ref{eq:DSM_wrate}) an extra procedure should be added
to the dual decomposition approach.

Note that the above problem is practically only relevant when the
phenomenon of non-unique globally optimal solutions
$\mathbf{s}_k(\mathbf{\lambda}^*)$ occurs at many tones. This is the
case for DSL scenarios that have a subset of strong symmetric
crosstalkers with equal line lengths, i.e. lines that generate the
same interference to their environment over multiple tones $k$, with
equal weights $w_n$ and user total power constraints
$P^{n,\mathrm{tot}}$. Here, we can have many subsequent tones with
multiple globally optimal solutions, namely where only one of the
subset of strong crosstalkers is active \cite{luo_dsm}. If no
special care is taken when recovering the primal transmit powers,
this can lead to extremely slow convergence or even no convergence
at all for these scenarios. More specifically, a fixed exhaustive
search order or iteration order in typical DSM algorithm
implementations will choose the same strong crosstalker over all
competing tones, instead of equally dividing the resources over the
competing users. 

To overcome this problem we propose a very simple, but effective,
interleaving procedure. More specifically this solution consists of
alternatingly on a per-tone basis, giving priority to the globally
optimal solution that corresponds to a different active strong
crosstalker of the symmetric subset. This interleaving procedure
replaces line 4 of Algorithm \ref{algo:idsm} with the following:
\begin{equation}\label{eq:interleaving}
 \forall k : \left\{
\begin{array}{ll}
 \mathcal{C}_k &= \{\mathrm{all~globally~optimal~solutions~
\tilde{\mathbf{s}}_k(\mathbf
{\lambda})~of~(\ref{eq:islaves}) \mathrm{~for~given~}
\mathbf{\lambda} }\},\\
& = \{\mathcal{C}_k(1),\ldots,\mathcal{C}_k(|\mathcal{C}_k|)\},\\
\mathrm{index} &= \mathrm{rem}(k,|\mathcal{C}_k|)+1, \\
\mathbf{s}_k^{i+1} &= \mathcal{C}_k (\mathrm{index}),
\end{array}
\right.
\end{equation}
where `rem($k,|\mathcal{C}_k|$)' refers to the remainder after
dividing $k$ by $|\mathcal{C}_k|$. As the suggested solution
requires that all globally optimal solutions in the first step of
(\ref{eq:interleaving}) actually be computed, it should be
combined with algorithms for the per-tone nonconvex problem that
indeed compute all these solutions such as OSB with a fixed order
exhaustive search for all tones or a multiple starting point
approach such as MS-DSB with a fixed iteration order for all tones. 

In the simulation Section \ref{sec:sim_results}, it will be
demonstrated how the usage of (\ref{eq:interleaving}) significantly
improves the robustness of the dual decomposition approach for
cWRS.\\

\textbf{Remark:} The above mentioned non-uniqueness also has an
impact on the Lipschitz continuity condition of the smoothed
gradient. More specifically this condition reduces to
\cite{Ion2008}:
\begin{equation}\label{eq:LC}
\Vert \sum_{k \in \mathcal{K}}
\tilde{\mathbf{s}}_k(\mathbf{\lambda}) - \sum_{k \in \mathcal{K}}
\tilde{\mathbf{s}}_k(\mathbf{\mu}) \Vert^2 \leq L_c \Vert
\mathbf{\lambda} - \mathbf{\mu} \Vert^2 \mathrm{~with~} L_c <
\infty
\end{equation}

For the above two-user two-tone symmetric strong crosstalk example,
this condition does not hold. This can be shown as follows. Let us
compare two cases: (1) optimal dual variables
$(\lambda_1^*,\lambda_2^*+\mu)$ corresponding to primal variables
$\{s_1^1=\mathrm{ON},s_2^1=\mathrm{ON},s_1^2=0,s_2^2=0\}$,
(2) optimal dual variables $(\lambda_1^*+\mu,\lambda_2^*)$
corresponding to primal variables
$\{s_1^1=0,s_2^1=0,s_1^2=\mathrm{ON},s_2^2=\mathrm{ON}\}$, where
$\mathbf{\mu} \geq 0$. For very small $\mu$ these two cases have
only slightly different dual variables but completely
different primal variables. So a small change in Lagrange
multipliers can lead to a large change in primal variables. This
means that for these specific cases Lipschitz continuity
(\ref{eq:LC}) is not satisfied and so the convergence speed will
be worse than $O(\frac{1}{\epsilon})$. However adding the
interleaving trick alleviates this problem, as will be demonstrated
in Section \ref{sec:sim_results}.

%
%

\section{Simulation results}\label{sec:sim_results}

In this section, simulation results are shown that compare the
performance of the improved dual decomposition approach with respect
to the subgradient based dual decomposition approach. More
specifically, in Section \ref{sec:simItCvxApp} we demonstrate the
convergence speed-up in using the improved dual decomposition
approach with respect to the subgradient based dual decomposition
approach for a DSM algorithm based on iterative convex
approximations (CA-DSB). In Section \ref{sec:simDirDSM} we
demonstrate how the improved dual decomposition approach in
combination with a direct DSM algorithm (MS-DSB) succeeds in
providing much faster convergence than with the subgradient based
dual decomposition approach. Furthermore the convergence improvement
for the interleaving procedure presented in Section
\ref{sec:dualprimal} is demonstrated.

The following parameter settings are used for the simulated DSL
scenarios. The twisted pair lines have a diameter of 0.5 mm (24
AWG). The maximum per-user total transmit power is 11.5 dBm for the
VDSL scenarios and 20.4 dBm for the ADSL scenarios. The SNR gap
$\Gamma$ is 12.9 dB, corresponding to a coding gain of 3 dB, a noise
margin of 6 dB, and a target symbol error probability of $10^{-7}$.
The tone spacing $\Delta_f$ is 4.3125 kHz. The DMT symbol rate $f_s$
is 4 kHz.

\subsection{Convergence speed up for iterative convex approximation
based DSM}\label{sec:simItCvxApp}
A first DSL scenario is shown in Figure \ref{fig:adsl}. This is a
so-called near-far scenario which is known to be challenging, where
DSM can make a substantial difference. For this scenario, we compare
the convergence behaviour for the improved approach for CA-DSB
(Algorithm \ref{algo:icadsb}) and the standard subgradient based
dual decomposition approach for CA-DSB, where convergence is defined
as achieving the optimal dual value of the convex approximation
within accuracy 0.05\%. The results are shown in Figure
\ref{fig:convergence}. For the subgradient scheme we used the
stepsize update rule $\delta=q/i$, where $q$ is the initial
stepsize and $i$ is the iteration counter \cite{dual_journal}. This
update rule is proven to converge to the optimal dual value. It can
be observed that different initial stepsizes lead to a different
convergence behaviour and this is generally difficult to tune. Note
that for all initial stepsizes, the subgradient dual decomposition
approach is still far from convergence after 500 iterations. The
improved dual decomposition approach, on the other hand,
automatically tunes its stepsize and converges very rapidly in only
40 iterations.

\begin{figure}[h]
\centering
\includegraphics[width=0.4\columnwidth]{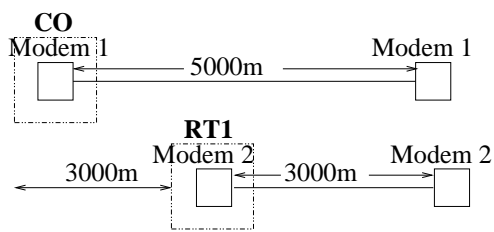}
\caption{2-user near-far ADSL downstream scenario}
\label{fig:adsl}
\end{figure}

\begin{figure}[h]
\centering
\includegraphics[width=0.6\columnwidth]{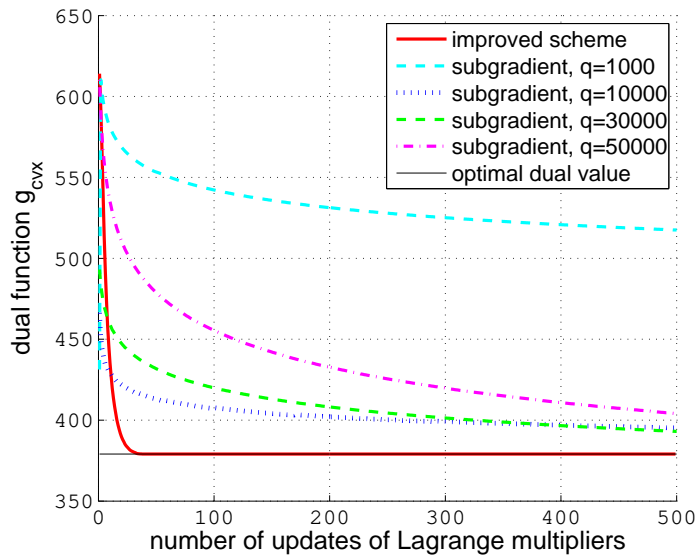}
\caption{Comparison of convergence behaviour between subgradient
dual decomposition approach, with different initial stepsizes $q$,
and the improved dual decomposition approach, for CA-DSB}
\label{fig:convergence}
\end{figure}

\subsection{Convergence speed up for direct
DSM}\label{sec:simDirDSM}

It was shown in \cite{tsiaflakis_bbosb} that for direct DSM
algorithms the subgradient based dual decomposition approach with a
particular stepsize selection procedure works well for ADSL
scenarios, i.e. there are typically only 50-100 subgradient
iterations needed to converge to the optimal dual variables. However
for multi-user VDSL scenarios, which use a much larger frequency
range and have to cope with significantly more crosstalk
interference, existing subgradient approaches
\cite{dual_journal}\cite{tsiaflakis_bbosb} are found to have
significant convergence problems. We will focus on such VDSL
scenarios and demonstrate how the improved approach succeeds in
providing much faster convergence.\\

The different VDSL scenarios are shown in Figures \ref{fig:vdsl4},
\ref{fig:vdsl6} and \ref{fig:vdsl6sym}, i.e. four-user VDSL
upstream, six-user VDSL upstream, and six-user VDSL upstream
scenario with a subset of strong symmetric crosstalkers,
respectively. The weights $w_n$ are chosen equal for all users $n$,
namely $w_n=1/N$. Note that we used the multiple starting point
procedure MS-DSB to solve the nonconvex per-tone problems for the
subgradient based dual decomposition approach as well as the
improved dual decomposition approach using (\ref{eq:cadsb_power}).
In \cite{dsb} it was shown that this procedure provides globally
optimal performance for practical ADSL and VDSL scenarios.\\

The first scenario, shown in Figure \ref{fig:vdsl4}, is a four-user
upstream VDSL scenario, consisting of two far-users with line length
$1200$ m and two near-users with line length $300$ m. In the higher
frequency range, there is a significant crosstalk coupling. This is
a near-far scenario where spectrum management is crucial as to avoid
significant performance degradation for the far-end users. Note that
the near-end users form a subset of strong symmetric crosstalkers,
in the high frequency range. As mentioned in Section
\ref{sec:dualprimal}, this can cause significant convergence
problems for the dual decomposition approach. In fact, simulations
show that the subgradient methods in \cite{tsiaflakis_bbosb} and
\cite{dual_journal} fail to converge to the dual variables, i.e.
after $20000$ iterations the complementarity conditions for some
users are far from being satisfied. The main problem is that the
stepsize selection procedure, which is a crucial component for fast
convergence, is difficult to tune. For decreasing step sizes as
proposed in \cite{dual_journal}, with different initial stepsizes,
the procedure does not converge. For adaptive stepsizes, as proposed
in \cite{tsiaflakis_bbosb}, very small stepsizes are selected
resulting in a very slow convergence ($> 20000 $ iterations). It is
observed that for some users there is a fast convergence to the
corresponding complementarity conditions whereas for other users
convergence is very slow. The presence of the strong subset of
symmetric crosstalkers, can lead to large changes in primal
variables for small changes in dual variables, as discussed in
Section \ref{sec:dualprimal}, if stepsizes are not tuned carefully.
The improved approach of Algorithm \ref{algo:idsm}, in contrary,
converges very fast to the optimal dual and primal variables. In
only $100$ iterations convergence is obtained, within an accuracy of
$0.05\%$.\\ 

\begin{figure}[h]
\centering
\includegraphics[width=0.5\columnwidth]{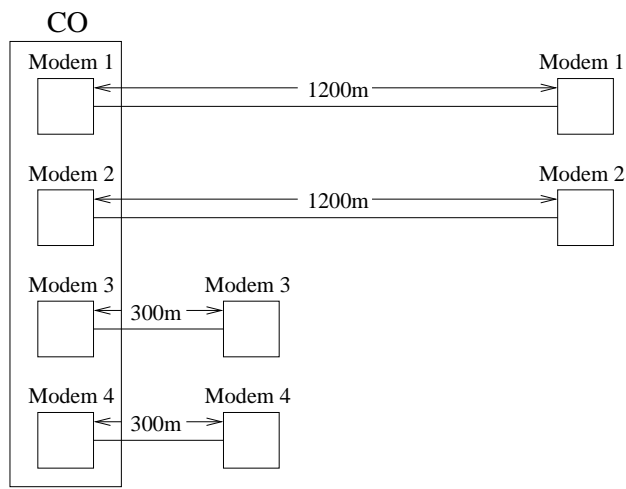}
\caption{4-user VDSL upstream scenario}
\label{fig:vdsl4}
\end{figure}

The second VDSL upstream scenario, shown in Figure \ref{fig:vdsl6},
consists of six users with different line lengths.
Also for this large crosstalk scenario, the standard subgradient
approaches \cite{tsiaflakis_bbosb}\cite{dual_journal} fail to
converge to the optimal dual variables, i.e. after $10000$
iterations the complementarity conditions are far from being
satisfied. Similarly to the scenario of Figure \ref{fig:vdsl4}, one
can observe very different convergence behaviour for the different
users to the corresponding complementarity conditions, where
typically for a few users convergence is very slow. The improved
dual decomposition approach however converges to the optimal dual
and primal variables in only $150$ iterations, within an accuracy of
$0.05\%$. The optimal transmit powers are shown in Figure
\ref{fig:vdsl6_powers} for illustration.\\

\begin{figure}[h]
\centering
\includegraphics[width=0.5\columnwidth]{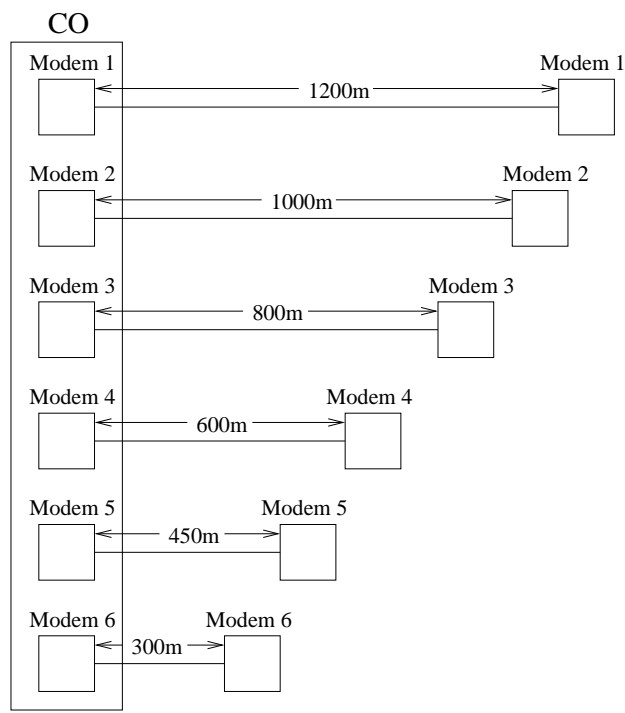}
\caption{6-user VDSL upstream scenario}
\label{fig:vdsl6}
\end{figure}

\begin{figure}[h]
\centering
\includegraphics[width=0.6\columnwidth]{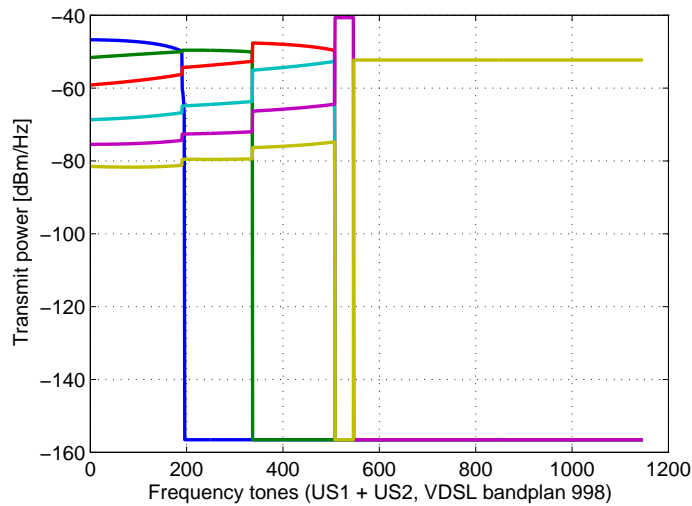}
\caption{Optimal transmit powers for DSL scenario of Fig.
\ref{fig:vdsl6} obtained using the improved dual decomposition
approach. Blue, green, red, cyan, magenta and yellow curves
correspond to transmit powers of users with line length $1200$m,
$1000$m, $800$m, $600$m, $450$m and $300$m respectively.}
\label{fig:vdsl6_powers}
\end{figure}

The VDSL upstream scenario of Figure \ref{fig:vdsl6sym} consists of
a six-line cable bundle with a subset of three strong symmetric
crosstalkers, namely the set of lines with length $300$m. The
standard subgradient approaches \cite{tsiaflakis_bbosb}
\cite{dual_journal} fail to converge to the optimal dual variables.
The presence of the strong symmetric crosstalkers significantly
slows down the convergence, as it can lead to multiple
globally optimal solutions for particular values of the dual
variables. Here, stepsize selection is very crucial as a small
change in dual variables can lead to a large change in primal
variables, as also explained in Section \ref{sec:dualprimal}. The
improved dual decomposition approach converges to the optimal
dual variables in only $150$ iterations, but does not succeed in
obtaining the primal optimal variables, because of the existence of
multiple globally optimal solutions (i.e. optimal transmit powers)
for optimal dual variables that do not satisfy the user total power
constraints. More specifically for this scenario, for the obtained
optimal dual variables, the obtained transmit powers jump to
different solutions, with total powers
$\{P^1,P^2,P^3\}=\{P^{1,\mathrm{tot}},P^{2,\mathrm{tot}},P^{3,
\mathrm {tot}}\}$, and $\{P^4,P^5,P^6\} \in
\big\{
\{3P^{\mathrm{tot}},A,A\},\{A,3P^{\mathrm{tot}},A\},\{A,A,3P^{
\mathrm{tot}}\}\big\}$, with $A$ being very small. These
primal solutions are shown in Figures
\ref{fig:vdsl6sym_NOinterleaving_three},
\ref{fig:vdsl6sym_NOinterleaving_two}
and \ref{fig:vdsl6sym_NOinterleaving_one} . One can observe that
in the low and medium frequency range (used tones 1-727), the users
with line lengths $1200$ m, $900$ m and $600$ m are active. In this
frequency range the strong crosstalkers will back-off and transmit
at small similar transmit powers corresponding to a total power
equal to $A$. However in the high frequency range 
(used tones 727-1147) where the users with line lengths $1200$ m,
$900$ m and $600$ are switched off, the three strong crosstalkers
will compete, where only one user can be active in each tone $k$
because of the significant crosstalk interference \cite{luo_dsm}.
As explained in Section \ref{sec:dualprimal}, typical DSM algorithm
implementations will select the same active user for each of these
tones, namely the user that corresponds to the smallest
dual variable, where the dual variable can be seen as a penalty. So
instead of dividing the total power over the three users equally,
which would lead to a primal solution satisfying the per-user total
power constraints, one user gets all power, leading to $P^{n}=3
P^{n,\mathrm{tot}}$ for user $n$ and $P^{m}=A$ for users $m \neq n$.
Note that this prevents convergence to the optimal primal variables
satisfying the per-user total power constraints.

However, when applying the proposed interleaving procedure
(\ref{eq:interleaving}), as proposed in Section
\ref{sec:dualprimal}, together with the improved dual decomposition
approach, we can observe a very fast convergence both in primal and
dual variables. Convergence is achieved in only $150$ iterations,
within an accuracy of $0.05\%$. The obtained optimal transmit powers
are shown in Figure \ref{fig:vdsl6sym_interleaving}. In the
frequency range between tone $728$ and tone $1147$, one can observe
the interleaving effect. In Figure
\ref{fig:vdsl6sym_interleaving_tones} this is zoomed in for tones
$970$ up to $975$.

\textbf{Remark: }
In the practical implementation the first step of the interleaving
procedure is changed to `all best solutions that are 99.9\% close to
each other'. This is to prevent that the procedure is only active
when the dual variables are exactly the same. The overall effect of
this is a negligible noise on the transmit powers as can be seen in
Figure \ref{fig:vdsl6sym_interleaving}.

\textbf{Remark: }
Note that applying the interleaving procedure combined with the
improved dual decomposition approach for the scenarios in Figures
\ref{fig:vdsl4} and \ref{fig:vdsl6}, also leads to a faster
convergence in both dual and primal variables. 

%

\begin{figure}[h]
\centering
\includegraphics[width=0.5\columnwidth]{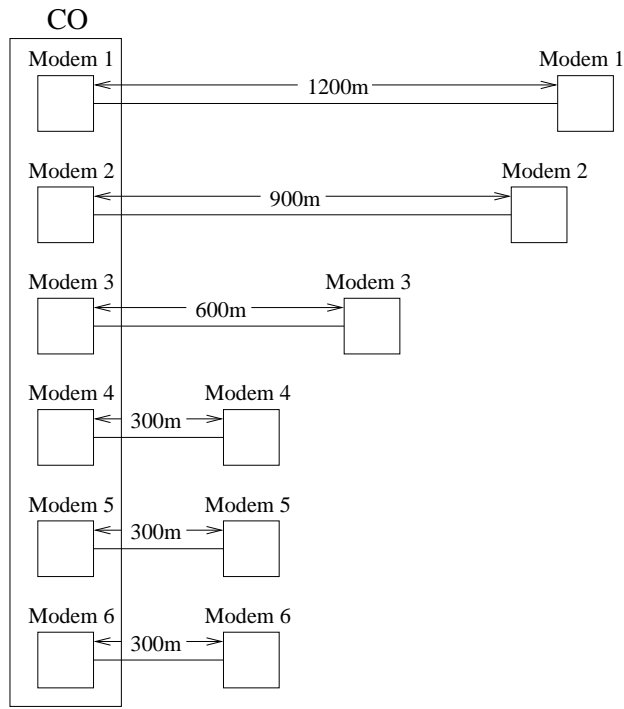}
\caption{6-user VDSL upstream scenario with subset of strong
symmetric crosstalkers}
\label{fig:vdsl6sym}
\end{figure}

\begin{figure}[h]
\centering
\includegraphics[width=0.6\columnwidth]
{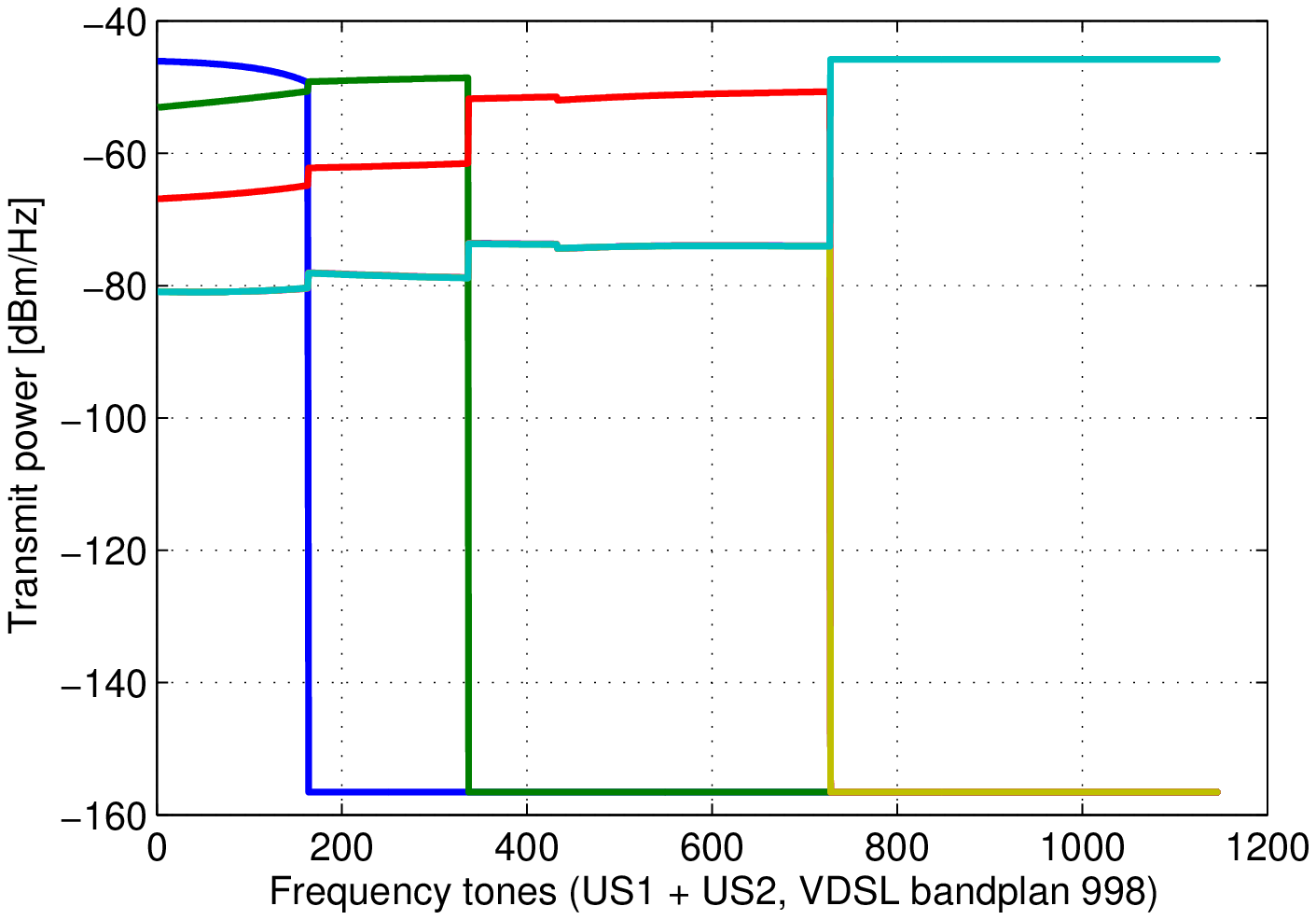}
\caption{Transmit powers for DSL scenario of Fig.
\ref{fig:vdsl6sym} for optimal dual variables $\mathbf{\lambda}^*$
and with user total powers $\{P^1,P^2,P^3,P^4,P^5,P^6\} =
\{P^{1,\mathrm{tot}},P^{2,\mathrm{tot}},P^{3,\mathrm{tot}},
3 P^{4,\mathrm{tot}},A,A\}$, where $A << P^{4,\mathrm{tot}}$.
Blue, green, red, cyan, magenta, yellow
curves correspond to transmit powers of users 1,2,3,4,5 and 6
respectively.}
\label{fig:vdsl6sym_NOinterleaving_three}
\end{figure}

\begin{figure}[h]
\centering
\includegraphics[width=0.6\columnwidth]{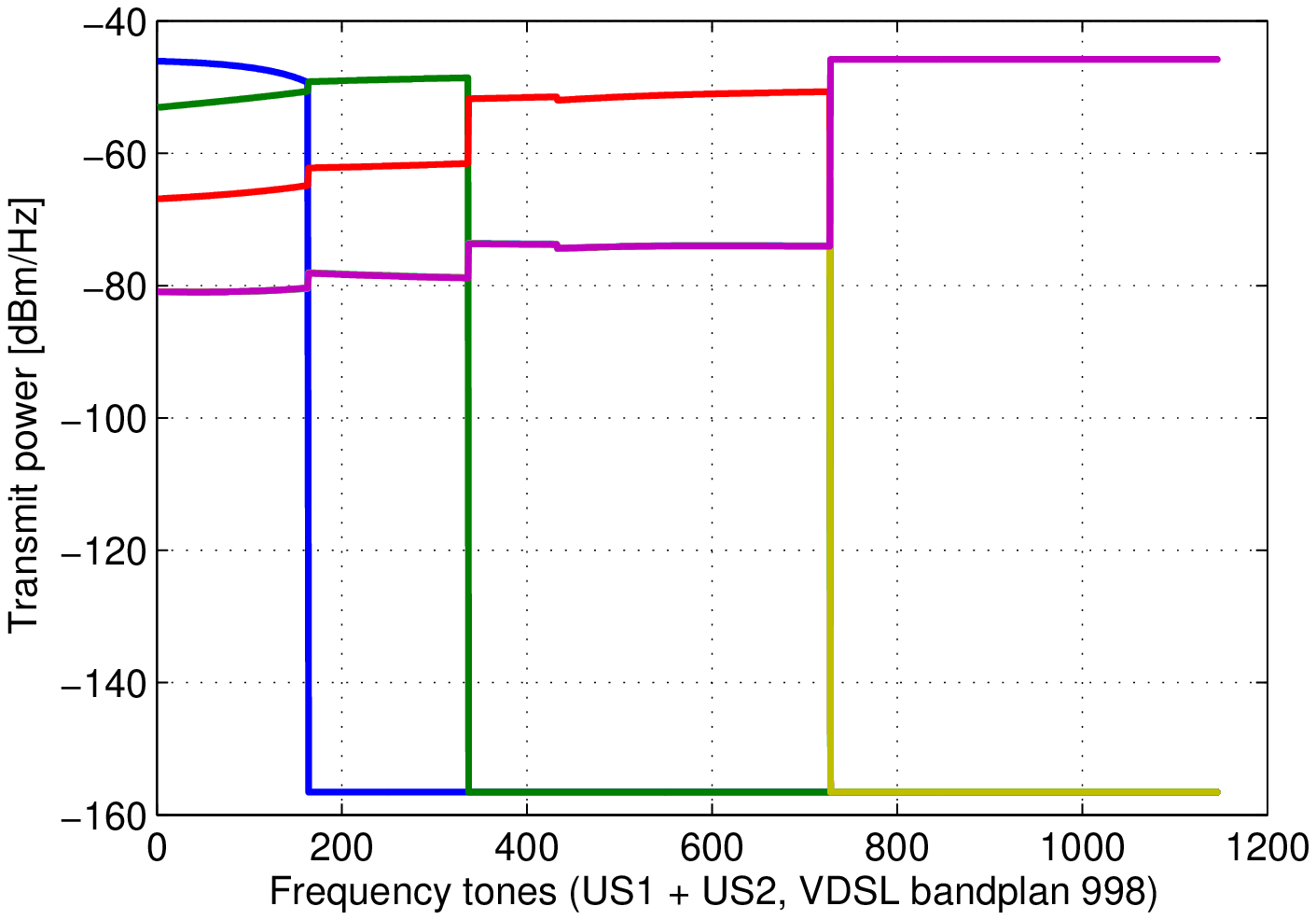}
\caption{Transmit powers for DSL scenario of Fig.
\ref{fig:vdsl6sym} for optimal dual variables $\mathbf{\lambda}^*$
and with user total powers $\{P^1,P^2,P^3,P^4,P^5,P^6\} =
\{P^{1,\mathrm{tot}},P^{2,\mathrm{tot}},P^{3,\mathrm{tot}},A,
3 P^{5,\mathrm{tot}},A\}$, where $A << P^{5,\mathrm{tot}}$. Blue,
green, red, cyan, magenta, yellow
curves correspond to transmit powers of users 1,2,3,4,5 and 6
respectively.}
\label{fig:vdsl6sym_NOinterleaving_two}
\end{figure}

\begin{figure}[h]
\centering
\includegraphics[width=0.6\columnwidth]{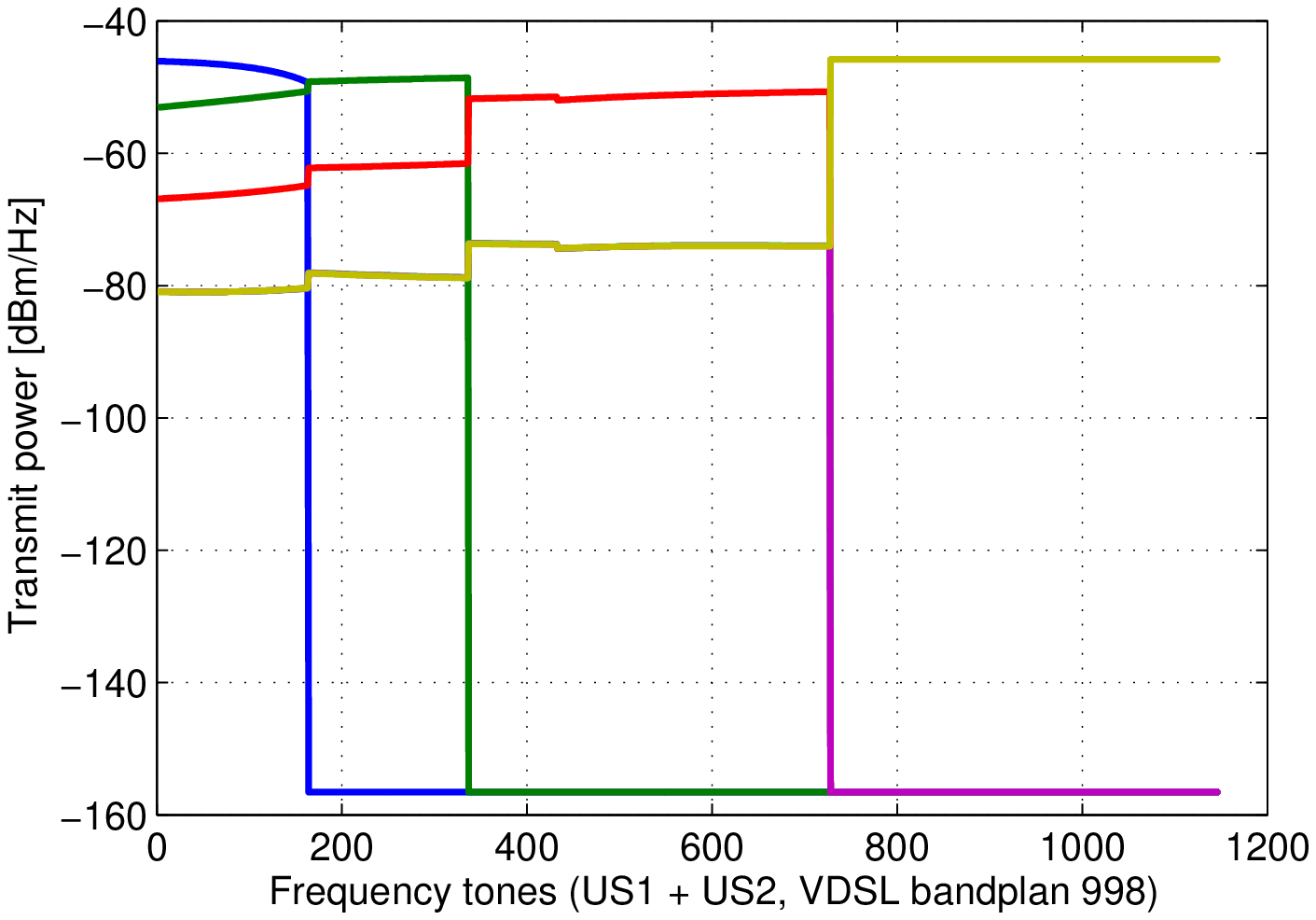}
\caption{Transmit powers for DSL scenario of Fig.
\ref{fig:vdsl6sym} for optimal dual variables $\mathbf{\lambda}^*$
and with user total powers $\{P^1,P^2,P^3,P^4,P^5,P^6\} =
\{P^{1,\mathrm{tot}},P^{2,\mathrm{tot}},P^{3,\mathrm{tot}},A,A,
3 P^{6,\mathrm{tot}}\}$, where $A << P^{6,\mathrm{tot}}$. Blue,
green, red, cyan, magenta, yellow curves correspond to transmit
powers of users 1,2,3,4,5 and 6 respectively.}
\label{fig:vdsl6sym_NOinterleaving_one}
\end{figure}

\begin{figure}[h]
\centering
\includegraphics[width=0.6\columnwidth]
{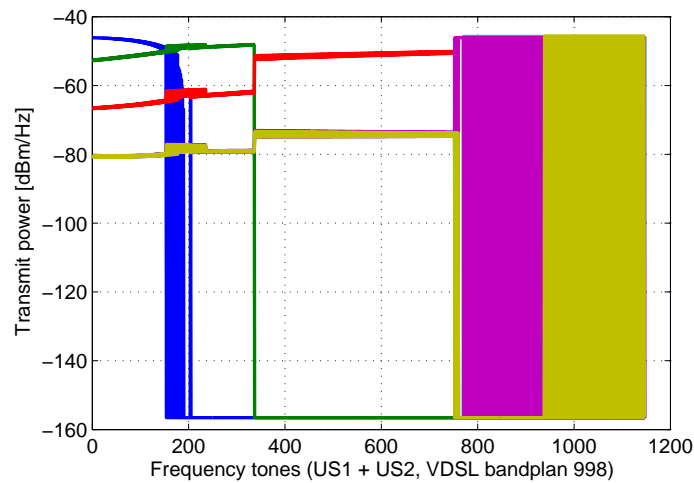}
\caption{Transmit powers for scenario of DSL scenario of Fig.
\ref{fig:vdsl6sym} obtained using improved dual decomposition
approach with the interleaving procedure (\ref{eq:interleaving}).
Blue, green, red, cyan, magenta, yellow curves correspond to
transmit powers of users 1,2,3,4,5 and 6 respectively.}
\label{fig:vdsl6sym_interleaving}
\end{figure}

\begin{figure}[h]
\centering
\includegraphics[width=0.6\columnwidth]
{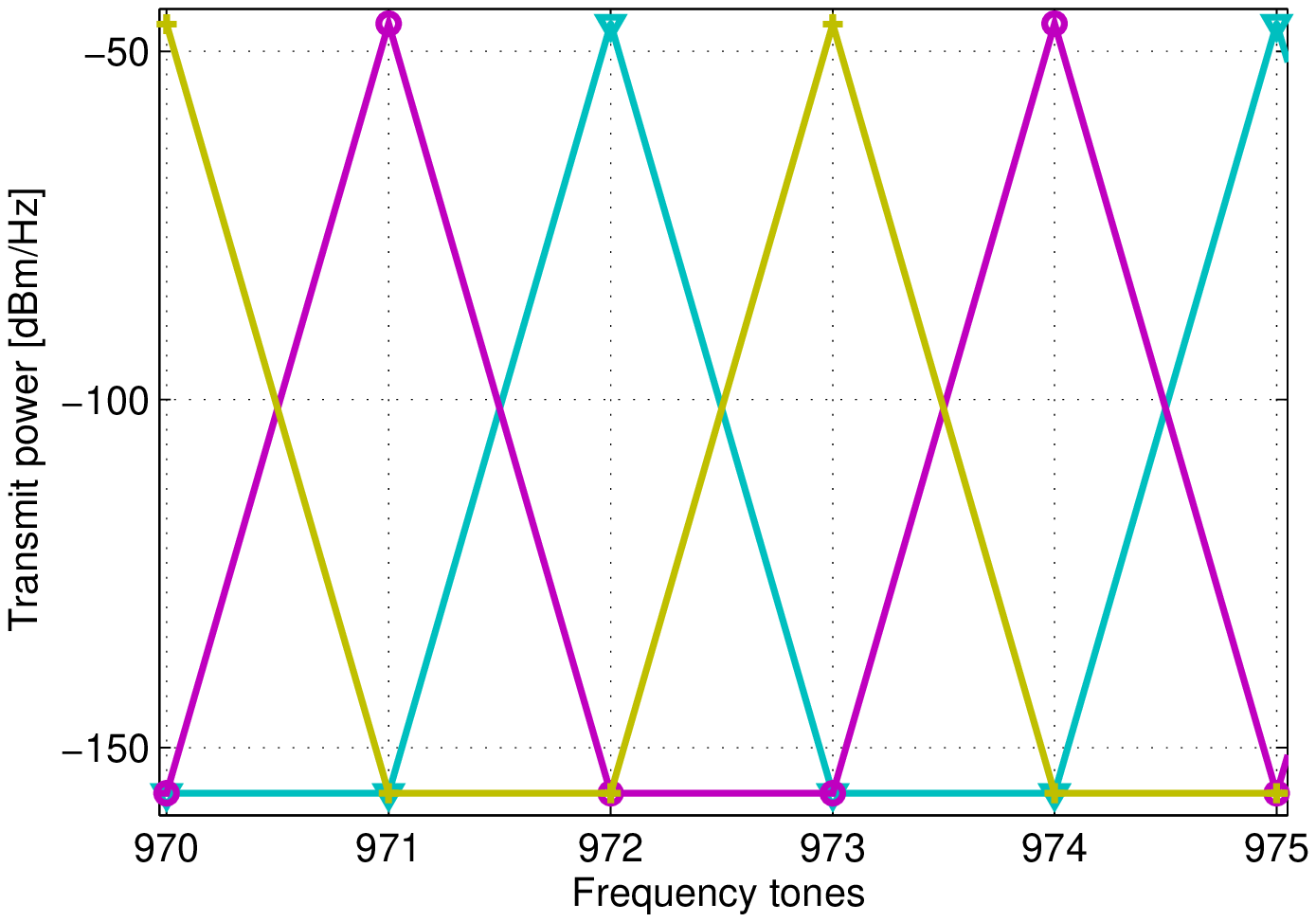}
\caption{Zoom in on tones $970$ to $975$ of Fig.
\ref{fig:vdsl6sym_interleaving}.}
\label{fig:vdsl6sym_interleaving_tones}
\end{figure}

\section{Conclusion}\label{sec:conclusion}
Dynamic spectrum management has been recognized as a key technology
to significantly improve the performance of DSL broadband access
networks by mitigating the impact of crosstalk
interference. Existing DSM algorithms use a standard subgradient
based dual decomposition approach to tackle the corresponding
nonconvex optimization problems. However, this standard approach is
often found to lead to extremely slow convergence or even no
convergence at all. Especially for multiuser VDSL scenarios with
subsets of strong symmetric crosstalkers significant convergence
problems are observed because (1) the stepsize selection procedure
of the subgradient updates is very critical, and (2) because special
care must be taken when recovering the optimal transmit powers from
the optimal dual solution. This paper proposes an improved dual
decomposition approach, which consists of an optimal gradient based
scheme with an automatic optimal stepsize selection removing the
need for a tuning strategy. With this approach it is shown how the
convergence of current state-of-the-art DSM algorithms, based on
iterative convex approximations, is improved by one order of
magnitude. The improved dual decomposition approach is also applied
to other DSM algorithms (OSB, ISB, ASB, (MS)-DSB, MIW). The addition
of an extra interleaving procedure for recovering the optimal
transmit powers from the dual optimal solution furthermore improves
the convergence of the proposed approach. Simulation results
demonstrate that significant convergence speed ups are obtained
using the proposed improved dual decomposition approach.

\bibliographystyle{IEEEtran}
\bibliography{bibliographyshort}

\end{document}